\documentclass[12pt]{article}
\usepackage{e-jc}

\usepackage{amssymb}
\usepackage{amsmath,amssymb,amsfonts,amsthm,latexsym}
\usepackage{mathrsfs}
\usepackage{color}
\usepackage{eucal}
\usepackage[all]{xypic}
\usepackage{xspace}
\usepackage{graphicx}
\usepackage{bbm}


\newtheorem{theorem}{Theorem}

\newtheorem{lemma}{Lemma}
\newtheorem{proposition}{Proposition}

\newtheorem{definition}{Definition}

\numberwithin{equation}{section}

\date{\dateline{May, 2013}{}\\
\small Mathematics Subject Classification: 05A16, 92E10\\
Keywords: unicellular map, bicellular map, tricellular map, fatgraph, bijection, boundary
                component, genus,}

\begin{document}
\begin{center}
{\bf\Large A bijection for tri-cellular maps}
\\
\vspace{15pt} Hillary S.~W. Han and Christian M. Reidys$^{\,\star}$
\end{center}

\begin{center}
         Department of Mathematics and Computer Science  \\
         University of Southern Denmark, Campusvej 55, \\
         DK-5230, Odense M, Denmark \\
         Phone$^{\,\star}$: 45-24409251 \\
         Fax$^{\,\star}$: 45-65502325 \\
         email$^{\,\star}$: duck@santafe.edu
\end{center}

\begin{abstract}
In this paper we give a bijective proof for a relation between uni- bi- and tricellular maps
of certain topological genus. While this relation can formally be obtained using Matrix-theory
as a result of the Schwinger-Dyson equation, we here present a bijection for the
corresponding coefficient equation. Our construction is facilitated by repeated application of a
certain cutting, the contraction of edges, incident to two vertices and the deletion
of certain edges.
\end{abstract}

\section{Introduction}

$k$-cellular maps can be viewed as drawings on a topological surface, they represent a
cell-complex of the latter and inherit the topological genus of the surface as their
geometric realization.

In a seminal paper Harer and Zagier \cite{Zagier86} computed the virtual Euler characteristic
of the Moduli space of curves, independently derived by Penner
\cite{Penner7} and still lack a combinatorial interpretation.
Key object here play unicellular maps \cite{Chapuy:11} of genus $g$ with $n$ edges, $U_g(n)$,
i.e.~fatgraphs\cite{Penner-waterman, Penner87,L-M}
with a unique boundary component. Most prominantly here is the recursion
$$
(n+1) {\sf u}_{g}(n) = 2(2n-1) {\sf u}_g(n-1)+(2n-1)(n-1)(2n-3) {\sf u}_{g-1}(n-2)
$$
In \cite{Zagier, Reidys:top1} the generating function of unicellular maps is obtained as
$$
{\bf U}_g(z) = \frac{P_g(z)}{(1-4z)^{3g-1/2}},
$$
where $P_g(z)$ is polynomial defined over the integers of degree at most $3g-1$ that is
divisible by $z^{2g}$ with $P_g(1/4) \neq 0$,
$[z^{2g}]P_g(z) \neq 0$ and $[z^h]P_g(z) = 0$ for $0 \leq h \leq 2g-1$.

Matrix-theory \cite{Dyson,Schwinger}, via the Schwinger-Dyson equation or representation
theorey \cite{Sagan}, connects the
generating functions of unicellular, ${\bf U}_{g}(z)$, and bicellular maps,
${\bf B}_{g}(z)$. The latter counts fatgraphs having two boundary components that are
connected as combinatorial graphs. The relation can also be proved using the representation
theoretic framework of Zagier \cite{Zagier} and is given by
\begin{equation}\label{E:1}
\sum_{g_1=0}^{g+1}\,{\bf U}_{g_1}(z){\bf U}_{g+1-g_1}(z)
+{\bf B}_{g}(z)= {\bf U}_{g+1}(z)/z.
\end{equation}
Recently \cite{Reidys-Han} the authors presented a bijective proof of the corresponding
coefficient equation
\begin{equation}\label{E:recursion}
\sum_{g_1=0}^{g+1}\,\sum_{i \geq 0}^n\,{\sf u}_{g_1}(i) {\sf u}_{g+1-g_1}(n-i)
+ {\sf b}_g(n)={\sf u}_{g+1}(n+1),
\end{equation}
which revealed a simple construction mechanism. The bijective proof can for instance be
applied, to significantly speed up the folding of RNA interaction structures
\cite{Reidys-Li,fenix2bb}.

An analogous relation between unicellular, bicellular and tricellular maps can also
be obtained via Matrix-theory.
In this paper we give a bijective proof of this relation which reads
\begin{equation}
\begin{split}
& {\sf u}_{g+2}(n+2)=\\
& {\sf t}_{g}^{}(n)+{\sf d}_{g+2}(n)+4{\sf u}_{g+2}(n+1)-
3{\sf u}_{g+2}(n)+(n+1)(2n+1){\sf u}_{g+1}(n),
\end{split}
\end{equation}
where ${\sf d}_{g+2}(n)$ is explicitly expressed via numbers of unicellular and bicellular maps.

Our strategy is to derive a partition of the set of unicellular maps of
genus $(g+2)$ with $(n+2)$ edges, see Fig.~\ref{F:visualpar} for a first step
of how to decompose the latter.

\begin{figure}[ht]
\begin{center}
\includegraphics[width=0.8\columnwidth]{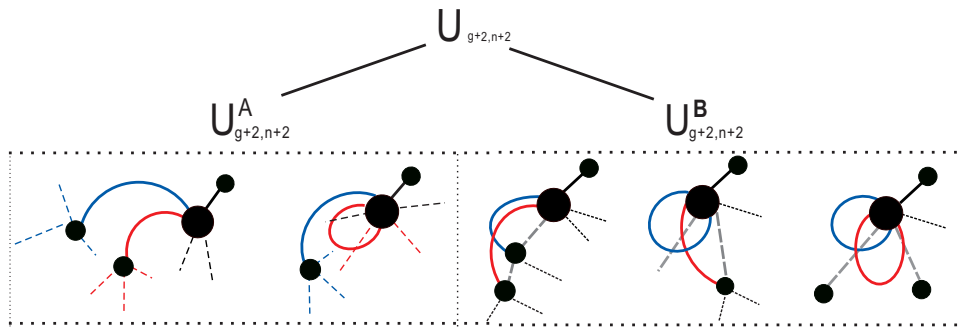}
\end{center}
\caption{\small The first step of partition of $U_{g+2, n+2}$.}
\label{F:visualpar}
\end{figure}

It is interesting to note that Matrix-theory does not provide any insight
w.r.t.~for instance quadricellular maps. It seems in fact unlikely that such relations
can be derived using this formal framework. The bijective proof presented here however
is rather straightforward once the correct partitioning is identified. We believe that
it is very well possible to prove similar relations for cellular maps with more than
three boundary components.

\section{Basic Definitions}
Let $S_{2n}$ denote the permutation group over $2n$ elements.
\begin{definition}
Let $k,J$ be positive integers. A $k$-cellular map is a triple
$(H, \alpha, (\gamma_i)_{1\le i\le k})$, where $H$ is
a set of cardinality $2n$, $\alpha$ a fixed-point free
involution and $\gamma_i$ are cycles such that
$\gamma=\prod^k_{i=1} \gamma_i \in S_{2n}$.
The elements of $H$ are half-edges, the cycles of $\alpha$ are
edges. The cycles of the permutation $\sigma=\alpha \circ
\gamma$ are the vertices $v_i$, $1 \leq i \leq J$.
The length of $v_i$ is its degree. The cycle $\gamma_i$ is
the $i$-th face.
\end{definition}

The combinatorial graph $G$ of a $k$-cellular map is the graph whose edges and
vertices are the cycles of $\alpha$ and $\sigma$.
We can regard a $G$-edge as a ribbon whose two sides are labeled by the
half-edges as follows: each side of the ribbon represents one half-edge, we
decide which half-edge corresponds to which side of the ribbon by the
convention that, if a half-edge $h$ belongs to a cycle $e$ of $\alpha$ and
a certain $v$ of $\sigma$, then $h$ is the right-hand side of the ribbon
corresponding to $e$, when entering $v$. Furthermore, around each vertex
$v$, the counterclockwise ordering of the half-edges belonging to the cycle
$v$ is given by that cycle, we obtain a graphical object called the fatgraph
, $\mathbb{G}$,
tantamount to $(H, \alpha, (\gamma_i)_i)$ and the graph $G$ is the corresponding
combinatorial graph of $\mathbb{G}$.
\begin{definition}
A planted $k$-cellular map is a $k$-cellular map in which each $\gamma_i$
contains a distinguished half-edge $p_i$, such that $(p_i)$ is a
$\sigma$-cycle. $(p_i)$ is called the plant of the face $\gamma_i$ and
$\sigma$-cycles, except of the plants are called $(np)$-vertices.
\end{definition}

In the following, we refer to edges not incident to plants as $(np)$-edges.
Let $X_{k}(n)$ denote the set of planted $k$-cellular maps that contain $n$
$(np)$-edges.

In planted maps we shall label the half-edges of $H$ such that
$(\alpha(p_i), p_i)=(R_i, S_i)$, that is
\begin{equation}\label{E:relabel}
\begin{split}
&\gamma_1=(R_1, 1, 2, \ldots, m_1, S_1), \\
&\gamma_i=(R_i, m_{i-1}+1, m_{i-1}+2, \ldots, m_{i}-1, m_{i}, S_i),  \quad 2\leq i \leq k.
\end{split}
\end{equation}
Given $x_{k, n} \in X_k(n)$ we define the linear order
$<_{\gamma}$ on $H$ for each face $\gamma_i$ via:
$$
S_{i-1}<_{\gamma} R_i <_{\gamma} \gamma_i(R_i)<_{\gamma} \gamma_i^2(R_i)<_{\gamma} \ldots
<_{\gamma} \gamma_i^{m_i}(R_i)<_{\gamma} \gamma_i^{m_i+1}(R_i)=S_i.
$$
Let furthermore $H_{\gamma_1,\dots,\gamma_r}$ denote the set consisting the half-edges in
one of these $\gamma_i$. In particular, $H_{\gamma_i}$ is the set of half-edges
contained in the face $\gamma_i$.

There is a natural equivalence relation over half-edges, $h \sim \alpha(h)$ and
in particular, $\alpha(p_i) \sim p_i$. If $h,\alpha(h) \in H_{\gamma_i}$, then
$(h, \alpha(h))$ is called a one-sided edge and $(h, \alpha(h))$ is called a
two-sided edge, otherwise.

For each vertex $v_j$, let $\min_x(v_j)$ denote the first half-edge via which
$\gamma_i$ enters $v_j$. This gives a canonical way of writing the cycle,
starting at $h_j^1=\min(v_j)$ namely $v_j=(h_j^1, \ldots, h_j^{n_j})$.
In particular, the vertex containing the half-edge $R_1$ is $v_1$,
the ``first'' vertex.

\section{The partition}

$1$-cellular maps are also called unicellular maps \cite{Chapuy:11}.
Let $U_{g, n}$ denote the set of planted, unicellular maps of genus $g$, having
$n$ $(np)$-edges. In particular, let $\epsilon$
denote the unicellular map of genus zero, containing no $(np)$-edge. This map
contains only one edge, the plant, and one additional $(np)$-vertex.

Let $u_{g+2, n+2}=(H, \alpha, \gamma)\in U_{g+2, n+2}$ with face
$\gamma=[R_1, 1, 2, \ldots, 2n+1, 2n+2, S_1]$.
Then
\begin{equation}\label{E:v1}
v_1=(h_1^1, h_1^2, h_1^3 \ldots, h_1^m),  \quad \text{\rm for some } m>0,
\end{equation}
where $h_1^1=R_1$. Thus $\alpha(h_1^2)=1$ and
$\gamma=[R_1, \alpha(h_1^2), 2, \ldots, 2n+1, 2n+2, S_1]$.
In the following we shall identify a partition of $U_{g+2, n+2}$ that will
facilitate our main bijection in Theorem~\ref{T:result}.

To begin, we consider for $m \geq 3$ the four half edges $h_1^2$,
$\alpha(h_1^2)$, $h_1^3$ and $\alpha(h_1^3)$.
Clearly, $\alpha \circ \sigma(h_1^2)=\alpha(h_1^3)$, whence
$h_1^2<_{\gamma} \alpha(h_1^3)$. Furthermore, by construction,
$$
\alpha(h_1^2) <_{\gamma}  h_1^2, \quad
\alpha(h_1^2)  <_{\gamma}  h_1^3,\quad
h_1^2          <_{\gamma}  \alpha(h_1^3),
$$
see also Fig.~\ref{F:branch}.
Accordingly, there are the two scenarios
$$
(A) \quad \alpha(h_1^2)<_{\gamma} h_1^2 <_{\gamma} \alpha(h_1^3) <_{\gamma} h_1^3
\quad\text{\rm and}\quad
(B)\quad \alpha(h_1^2)<_{\gamma} h_1^3 <_{\gamma} h_1^2 <_{\gamma} \alpha(h_1^3).
$$
The case $m=2$ belongs to scenario $(A)$, which then reduces to
$$
\quad \alpha(h_1^2)<_{\gamma} h_1^2.
$$
This generates the bipartition of $U_{g+2, n+2}$,
\begin{equation}\label{E:AB}
U_{g+2, n+2}=U_{g+2, n+2}^{A} \, \dot\cup \, U^{B}_{g+2, n+2}.
\end{equation}
\begin{lemma}
In $U_{g+2, n+2}^{A}$-elements the half-edges $\alpha(h_1^2)$ and
$h_1^2$ belong to two different vertices, $v_1$ and $v_2$.
\end{lemma}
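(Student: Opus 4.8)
The plan is to prove the two halves of the statement separately: that $h_1^2\in v_1$ is immediate, since by construction $v_1=(h_1^1,h_1^2,\dots,h_1^m)$ with $h_1^1=R_1$; the content is to show that $\alpha(h_1^2)$ lies in a different vertex and that this vertex is exactly $v_2$.

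For the second part I would argue by contradiction. If $\alpha(h_1^2)$ and $h_1^2$ shared a vertex it would be $v_1$, so $(h_1^2,\alpha(h_1^2))$ would be a loop at $v_1$ and $\alpha(h_1^2)=h_1^k$ for some index $k$. First I would rule out $k\in\{1,2,3,m\}$: $k\neq 1,2$ because $\alpha$ is fixed-point-free and $\alpha(h_1^2)=1\neq R_1$; $k\neq m$ because $\gamma(h_1^m)=\alpha(\sigma(h_1^m))=\alpha(h_1^1)=S_1$ forces $h_1^m$ to be the $<_\gamma$-maximal non-plant half-edge, whereas $\alpha(h_1^2)=\gamma(R_1)$ is the $<_\gamma$-minimal one; and $k\neq 3$, since $\alpha(h_1^3)=h_1^2$ would turn the scenario-$(A)$ chain $\alpha(h_1^2)<_\gamma h_1^2<_\gamma\alpha(h_1^3)$ into $h_1^2<_\gamma h_1^2$. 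So $4\le k\le m-1$, and the loop $(h_1^2,h_1^k)$ splits the cyclic word $v_1$ into the two arcs $P=(h_1^3,\dots,h_1^{k-1})$ (nonempty) and $Q=(h_1^{k+1},\dots,h_1^m,h_1^1)$. Now I would invoke the standard behaviour of a loop in a unicellular map: a loop is non-separating, and its two half-edges $h_1^2,h_1^k$ split the single face $\gamma$ into two arcs in such a way that the half-edges of arc $P$ of $v_1$ all lie in one of these face-arcs and those of arc $Q$ all lie in the other — seen by cutting the surface along the loop, or by tracking how $\gamma$ re-enters $v_1$ between successive visits. Since $\alpha(h_1^2)=\gamma(R_1)$, the half-edge $h_1^k$ is the $<_\gamma$-successor of $R_1$, so the two face-arcs are $\{h:h_1^k<_\gamma h<_\gamma h_1^2\}$ and the complementary arc, the latter containing $R_1$. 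As $R_1\in Q$, all of $Q$ lies in that complementary arc, hence all of $P$ lies in $\{h:h_1^k<_\gamma h<_\gamma h_1^2\}$; in particular $h_1^3\in P$ satisfies $h_1^3<_\gamma h_1^2$, i.e. $u_{g+2,n+2}\in U^{B}_{g+2,n+2}$, contradicting $u_{g+2,n+2}\in U^{A}_{g+2,n+2}$. Therefore $\alpha(h_1^2)$ lies in a vertex $w\neq v_1$.

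Finally I would identify $w$ with $v_2$: the edge $(\alpha(h_1^2),h_1^2)$ is two-sided since its endpoints lie in $w\neq v_1$, and $\alpha(h_1^2)=\gamma(R_1)$ is the $<_\gamma$-smallest half-edge apart from $R_1$, which itself is not in $w$; hence $\min_x(w)=\alpha(h_1^2)$, the second-smallest value of $\min_x$ over all vertices (the smallest being $\min_x(v_1)=R_1$), so $w=v_2$, and in particular $v_2$ exists. I expect the only genuinely non-routine step to be the loop step — that a loop at $v_1$ confines the rotation-arc $P$ to a single face-arc; everything else is bookkeeping with $\sigma=\alpha\circ\gamma$ and the face word $\gamma=[R_1,1,2,\dots,2n+2,S_1]$.
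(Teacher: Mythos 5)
Your reduction to a loop at $v_1$ and the elimination of $k\in\{1,2,3,m\}$ are fine, but the step you yourself single out as the crux --- that a loop $(h_1^2,h_1^k)$ at $v_1$ confines the rotation-arc $P=(h_1^3,\dots,h_1^{k-1})$ to a single face-arc --- is not a true statement about unicellular maps of positive genus, and the argument collapses exactly there. Concretely, take $H=\{R_1,1,2,\dots,8,S_1\}$, $\gamma=(R_1,1,2,\dots,8,S_1)$ and $\alpha=(R_1\,S_1)(1\,3)(4\,6)(2\,7)(5\,8)$. Then $\sigma=\alpha\circ\gamma$ has cycles $(S_1)$ and $v_1=(R_1,3,6,2,1,7,5,4,8)$, so this is a planted unicellular map with $4$ $(np)$-edges and genus $2$, i.e.\ an element of $U_{g+2,n+2}$ with $g=0$, $n=2$. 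Here $h_1^2=3$, $h_1^3=6$, $\alpha(h_1^2)=1$, $\alpha(h_1^3)=4$, so $\alpha(h_1^2)<_\gamma h_1^2<_\gamma \alpha(h_1^3)<_\gamma h_1^3$ and the map lies in $U^{A}_{g+2,n+2}$; moreover $\alpha(h_1^2)=1=h_1^5$ lies on $v_1$, so your loop occurs with $k=5$, and $P=(6,2)$ meets \emph{both} face-arcs: $2$ lies strictly between $1$ and $3$ in $<_\gamma$, while $6$ does not. Cutting along the loop cannot rescue the claim; the whole point of positive genus is that the corners of a vertex may interleave arbitrarily between the two face-arcs determined by an edge (this interleaving is precisely what Chapuy-type trisection arguments measure). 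So the ``standard behaviour of a loop'' you invoke is false, and with it the deduction $h_1^3<_\gamma h_1^2$.

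You should also know that the paper's own proof rests on an exactly parallel unjustified assertion, namely that the half-edge $k_i=\gamma^{-1}(h_1^2)$ satisfies $h_1^3\le_\gamma k_i$; in the example above $\gamma^{-1}(3)=2<_\gamma 6=h_1^3$, so that assertion fails as well. In fact the example satisfies scenario $(A)$ while $\alpha(h_1^2)$ and $h_1^2$ lie on the same vertex, so the statement cannot be established for all of $U^{A}_{g+2,n+2}$ by any argument of this shape: some further hypothesis is needed (the different-vertex property is only ever used for the contraction bijections on subsets of $U^{I}_{g+2,n+2}$, whereas the map above lies in $U^{II}_{g+2,n+2}$, since no $H_{\bar\gamma_i}$ is closed). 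So the gap in your write-up is not a routine bookkeeping omission that a more careful trace of $\sigma=\alpha\circ\gamma$ would fill; the key fact you appeal to is false in genus $\ge 1$, and the lemma as literally stated admits the counterexample above.
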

\begin{proof}
We have
$$
\alpha(h_1^2) <_{\gamma} h_1^2 <_{\gamma} \alpha(h_1^3) <_{\gamma} h_1^3,
$$
and $\gamma(\alpha(h_1^2))=h_1^2$.
Suppose now $\alpha(h_1^2)$ and $h_1^2$ belong to $v_1$.
Then there exists a half-edge $k_i$ satisfying $\gamma(k_i)=h_1^2$ such that
$h_1^3 <_{\gamma} k_i$ or $h_1^3=k_i$, but this implies $h_1^3 <_{\gamma} h_1^2$,
a contradiction.
\end{proof}

We next refine $U^{A}_{g+2, n+2}$: for $u_{g+2, n+2} \in U_{g+2, n+2}^A$,
we consider the cycle
$$
\bar{\gamma}=
(\alpha(h_1^2), 2, \ldots, h_1^2, \alpha(h_1^3), \ldots, h_1^3, h_1^3+1, \ldots, 2(n+1))
$$
and we use $(\alpha(h_1^i), h_1^i), i=2,3$ to split the $\bar{\gamma}$ into
\begin{equation}\label{E:3GHs}
\bar{\gamma}_{1}=(\alpha(h_1^2), 2,  \ldots,  h_1^2); \quad
\bar{\gamma}_{2}=(\alpha(h_1^3),  \ldots, h_1^3); \quad
\bar{\gamma}_{3} =(h_1^3+1, \ldots, 2(n+1)),
\end{equation}
see Fig.~\ref{F:branch}.

\begin{figure}[ht]
\begin{center}
\includegraphics[width=0.8\columnwidth]{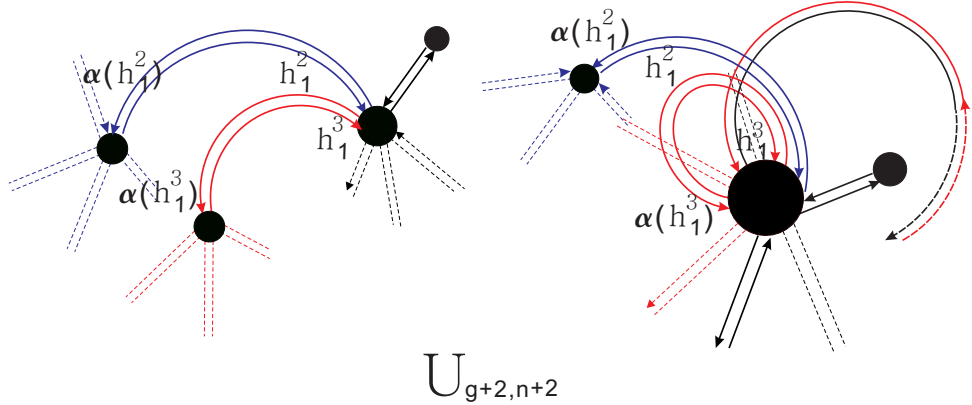}
\end{center}
\caption{\small The three branches (red, blue, black) together with
the two pairs $(\alpha(h_1^2), h_1^2)$ and $(\alpha(h_1^3), h_1^3)$. }
\label{F:branch}
\end{figure}

Suppose the restriction $\alpha|_{S}$ is a welldefined fixed-point free
involution, then we call $S$ closed. Similarly, the sets $H_{\gamma_1,\dots,\gamma_r}$
and $H_{\bar{\gamma}_{i}}$, $i=1, 2, 3$ are called closed, if
$\alpha|_{H_{\gamma_1,\dots,\gamma_r}}$
and $\alpha|_{H_{\bar{\gamma}_{i}}}$ are fixed-point free involutions.

Let $U_{g+2, n+2}^{II}$ denote the subset of
$U^{A}_{g+2, n+2}$-elements in which no $H_{\bar{\gamma}_{i}}$ is closed and let
$U_{g+2, n+2}^{I}$ denote its complement.
Then
\begin{equation}\label{E:I-II}
U^A_{g+2, n+2}=U_{g+2, n+2}^{I}\, \dot\cup \, U_{g+2, n+2}^{II}.
\end{equation}
We refine $U_{g+2,n+2}^{I}$ further:
\begin{itemize}
\item $U^1_{g+2, n+2}$: the set of $U_{g+2, n+2}^{I}$-elements in which exactly two
      ${\bar{\gamma}_{i}}, {\bar{\gamma}_{j}}$ are empty,
\item $U^2_{g+2, n+2}$: the set of $U_{g+2, n+2}^{I}$-elements in which exactly
      one ${\bar{\gamma}_{i}}$ is empty,
\item $U^{> 2}_{g+2, n+2}$: the complement of $U^1_{g+2, n+2}$ and $U^2_{g+2, n+2}$,
      that is, the set of $U_{g+2, n+2}^{I}$ in which no ${\bar{\gamma}_{i}}$ is empty.
\end{itemize}
Thus
\begin{equation}\label{F:disU}
U^{I}_{g+2, n+2}= U^1_{g+2, n+2} \,\dot\cup\,  U^{2}_{g+2, n+2}\,\dot\cup \, U^{>2}_{g+2, n+2}.
\end{equation}

We refine $U_{g+2,n+2}^{>2}$ a bit more, for this purpose let
\begin{itemize}
\item $U^{>2, 3}_{g+2, n+2}$: be the subset of $U_{g+2, n+2}^{>2}$-elements in which
      $\bar{\gamma}_{1}=(\alpha(h_1^2), h_1^2)$.
\item $U^{>2, 4}_{g+2, n+2}$: be the subset of $U_{g+2, n+2}^{>2}$-elements in which
      $\bar{\gamma}_{1}=(\alpha(h_1^2), k^2_1, \ldots, k_1^m)$, $m \geq 4$ and
      $\bar{\gamma}_{2}=(\alpha(h_1^3), h_1^3)$.
\item $U^{5}_{g+2, n+2}$: the complement of $U^{>2, 3}_{g+2, n+2}$ and $U^{>2, 4}_{g+2, n+2}$,
      that is subset of $U^{>2}_{g+2, n+2}$-elements in which
      $\bar{\gamma}_{1}=(\alpha(h_1^2), k^2_1, \ldots, k_1^m)$ and
      $\bar{\gamma}_{2}=(\alpha(h_1^3), k^2_2, \ldots, k_2^l)$, $m,l \geq 4$.
\end{itemize}

Accordingly,
\begin{equation}\label{E:geq2}
U^{>2}_{g+2, n+2}=U^5_{g+2, n+2} \,\dot\cup \, U^{>2,3}_{g+2, n+2}\,\dot\cup \, U^{>2,4}_{g+2, n+2}.
\end{equation}

Furthermore we present
$U^{>2,3}_{g+2, n+2}$ :
\begin{equation}\label{E:U-g2-3}
U^{>2,3}_{g+2, n+2}=U^{3}_{g+2, n+2}\setminus  U^{m, 1}_{g+2, n+2},
\end{equation}
where
\begin{itemize}
\item $U^{3}_{g+2, n+2}$ denotes the subset of $U_{g+2, n+2}^{I}$-elements in
which $\bar{\gamma}_{1}=(\alpha(h_1^2), h_1^2)$,
\item $U^{m, 1}_{g+2, n+2}$ denotes the subset of $U^2_{g+2, n+2}$-elements
      in which $\bar{\gamma}_{1}=(\alpha(h_1^2), h_1^2)$.
\end{itemize}
Furthermore we present $U^{>2,4}_{g+2,n+2}$ as
\begin{equation}\label{E:U-g3-4}
U^{>2,4}_{g+2, n+2}=U^{4}_{g+2, n+2}\setminus (U^{m, 2}_{g+2, n+2}\,\dot\cup\, U^{m, 3}_{g+2, n+2}),
\end{equation}
where
\begin{itemize}
\item $U^{4}_{g+2, n+2}$ is the subset of $U_{g+2, n+2}^{I}$-elements
      with $\bar{\gamma}_{2}=(\alpha(h_1^3), h_1^3)$,
\item $U^{m, 2}_{g+2, n+2}$ is the subset of $U^2_{g+2, n+2}$-elements
      with $\bar{\gamma}_{2}=(\alpha(h_1^3), h_1^3)$,
\item $U^{m, 3}_{g+2, n+2}$ is the subset of $U^{>2}_{g+2, n+2}$-elements
      with $\bar{\gamma}_{1}=(\alpha(h_1^2), h_1^2)$ and
      $\bar{\gamma}_{2}=(\alpha(h_1^3), h_1^3)$.
\end{itemize}

\section{Some lemmas}

In this section we state three procedures that are employed repeatedly in our
bijection. They are ``cutting'', ``contraction'' and ``deletion''. These procedures
constitute the key three operations that, applied in various contexts, facilitate
the bijection.

\begin{lemma}\label{L:alpha-cutting}{\bf  (Cutting)}
Suppose we are given a planted, unicellular map $u=(H, \alpha, \gamma) \in
U^A_{g+2, n+2}$ with
\begin{equation}\label{E:face}
\gamma=(R_1, \alpha(h_1^{2}), \ldots, h_1^{2},
\alpha(h_1^{3}), \ldots, h_1^{3}, h_1^{3}+1, \ldots, 2(n+1), S_1).
\end{equation}
Then $u$ can be mapped to a planted, $3$-cellular map, $x_{3, n+2}\in X_{3}(n+2)$, with
the three faces $\gamma_1, \gamma_2, \gamma_3$ via

\begin{equation}\label{E:c2}
\begin{split}
& c_2\colon U^A_{g+2, n+2}  \longrightarrow X_{3}(n+2), \\
& (H,\alpha,\gamma)\mapsto
(H,\alpha,(\gamma_1, \gamma_2, \gamma_3))
\end{split}
\end{equation}

where
\begin{equation}\label{E:gamma123}
\gamma_1=(R_1, 1, \ldots, m_1, S_1), \
\gamma_2=(R_2, m_1+1, \ldots, m_2, S_2), \
\gamma_3=(R_3, m_2+1, \ldots, m_3, S_3).
\end{equation}
Furthermore, the mapping $c_2$ has the following inverse:
\begin{equation}\label{E:g2}
\begin{split}
& g_2 \colon X_{3}(n+2) \longrightarrow U^A_{g+2, n+2}\\
& (H,\alpha, (\gamma_1, \gamma_2, \gamma_3)) \mapsto  (H,\alpha,\gamma).
\end{split}
\end{equation}
\end{lemma}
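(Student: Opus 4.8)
The plan is to define the map $c_2$ explicitly as a surgery on the half-edge set and show it lands in $X_3(n+2)$, then exhibit $g_2$ as a glueing operation and verify the two compositions are identities. First I would describe $c_2$ concretely. We are given $u=(H,\alpha,\gamma)\in U^A_{g+2,n+2}$ with the face written as in \eqref{E:face}. The three "branches" $\bar\gamma_1,\bar\gamma_2,\bar\gamma_3$ from \eqref{E:3GHs} already partition the non-plant, non-$R_1$, non-$S_1$ half-edges of $\gamma$ into three consecutive blocks. The idea is to cut $\gamma$ at the two pairs $(\alpha(h_1^2),h_1^2)$ and $(\alpha(h_1^3),h_1^3)$: introduce three new plants $S_1,S_2,S_3$ together with their $\alpha$-partners $R_1,R_2,R_3$ (reusing the original $R_1,S_1$ for the first face) and form $\gamma_1=(R_1,1,\dots,m_1,S_1)$, $\gamma_2=(R_2,\dots,S_2)$, $\gamma_3=(R_3,\dots,S_3)$ by relabelling the three blocks $\bar\gamma_1,\bar\gamma_2,\bar\gamma_3$ in their induced $<_\gamma$-order, keeping $\alpha$ unchanged on the old half-edges and defining it on the six new plant half-edges by $\alpha(R_i)=S_i$. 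I would make precise that the relabelling is the order isomorphism dictated by \eqref{E:relabel}, so that no ambiguity remains.

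Next I would check that the output is a legitimate planted $3$-cellular map with $n+2$ $(np)$-edges. The permutation $\alpha$ is still a fixed-point-free involution on the enlarged half-edge set (old cycles untouched; three new transpositions $(R_i,S_i)$ added); $\gamma=\gamma_1\gamma_2\gamma_3$ is a product of three disjoint cycles by construction; each $\gamma_i$ carries a distinguished plant half-edge $S_i$ with $(S_i)$ a $\sigma$-cycle, since $\sigma=\alpha\circ\gamma$ sends $S_i\mapsto\alpha(S_i)=R_i\mapsto$ ... — here I must verify that $S_i$ is a fixed point of the new $\sigma$; this follows because after the cut $\gamma_i(S_i)=R_i$ (cyclically) and $\alpha(R_i)=S_i$... actually one checks $\sigma(S_i)=\alpha(\gamma(S_i))=\alpha(R_i)=S_i$ only if $\gamma_i(S_i)=R_i$, which holds as $S_i$ is the last and $R_i$ the first entry of the cycle $\gamma_i$. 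Counting: the $(np)$-edges are exactly the old $n+2$ non-plant edges, none destroyed, so $x_{3,n+2}\in X_3(n+2)$. I would also note that genus is not preserved and need not be — that is the whole point of "cutting", and the constraint $g+2$ is recorded only via membership in $U^A_{g+2,n+2}$, which will be recovered by the inverse together with the requirement that the three branches reassemble into a genus-$(g+2)$ object; this is the subtle point and I would flag that $X_3(n+2)$ here must implicitly mean the image, i.e. those $3$-cellular maps for which the glueing $g_2$ produces a connected unicellular map of the right genus.

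Then I would define $g_2$ as the reverse surgery: given $(H,\alpha,(\gamma_1,\gamma_2,\gamma_3))$, delete the three plants, concatenate the three faces in the order $\gamma_1,\gamma_2,\gamma_3$ along the pattern of \eqref{E:face}, reinstating $R_1,S_1$ as the single plant of the resulting unicellular map and identifying the cut points so that the pairs $(\alpha(h_1^2),h_1^2)$, $(\alpha(h_1^3),h_1^3)$ are restored as in \eqref{E:v1}. I would then verify $g_2\circ c_2=\mathrm{id}$ and $c_2\circ g_2=\mathrm{id}$: both are immediate once the relabellings are spelled out, since cutting at two prescribed pairs and glueing back at the same two pairs are inverse block-reorderings of a linearly ordered set, with $\alpha$ carried along verbatim on the old half-edges and the plant half-edges in bijective correspondence $R_1\leftrightarrow R_1$, and $\{R_2,S_2,R_3,S_3\}$ created/destroyed in tandem. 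Finally I must confirm $g_2$ indeed outputs an element of $U^A_{g+2,n+2}$, i.e. the glued map is unicellular (true: the three faces merge into one along two identifications), has the correct number of $(np)$-edges, and lies in scenario $(A)$ — the latter because the glueing is designed to place $\alpha(h_1^2)<_\gamma h_1^2<_\gamma\alpha(h_1^3)<_\gamma h_1^3$ by the order in which the branches are concatenated, and the genus is $g+2$ because Euler's formula with one face, the same vertex set, and $n+3$ edges forces it once we know $c_2$ applied to a genuine $U^A_{g+2,n+2}$-element and then $g_2$ returns to it. The main obstacle I expect is precisely pinning down the codomain: $X_3(n+2)$ as literally defined is too large, so either $c_2$'s image must be characterised (the "admissible" triples whose reassembly is connected of genus $g+2$) or one argues that the branch structure of \eqref{E:3GHs} automatically guarantees $g_2$ lands back in $U^A_{g+2,n+2}$; resolving this bookkeeping, rather than any deep combinatorics, is where the care is needed.
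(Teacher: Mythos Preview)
Your overall strategy---cut the single face into three along the two distinguished $\alpha$-pairs, then glue back for the inverse---is exactly the paper's approach. But there is a concrete discrepancy that matters.

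The lemma statement sends $(H,\alpha,\gamma)$ to $(H,\alpha,(\gamma_1,\gamma_2,\gamma_3))$ with the \emph{same} $H$ and $\alpha$; no half-edges are added. You propose to introduce new plant half-edges $R_2,S_2,R_3,S_3$ and enlarge $\alpha$ accordingly. The paper does not do this. Instead, the existing half-edges $h_1^2$ and $h_1^3$ become the plants of the new faces: one sets $\gamma_1=\bar\gamma_1=(\alpha(h_1^2),\ldots,h_1^2)$ and $\gamma_2=\bar\gamma_2=(\alpha(h_1^3),\ldots,h_1^3)$ verbatim, while the original plant edge $(R_1,S_1)$ goes with the third face, $\gamma_3=(R_1,\bar\gamma_3,S_1)$ (not the first, as you have it). The plant check you wrote down is then exactly right but applied to $h_1^2$ rather than a new $S_1$: in the product $\gamma_1\gamma_2\gamma_3$ one has $\gamma_1(h_1^2)=\alpha(h_1^2)$ cyclically, so $\sigma'(h_1^2)=\alpha(\alpha(h_1^2))=h_1^2$, and $(h_1^2)$ is a $\sigma'$-cycle; likewise for $(h_1^3)$. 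The relabelling to the form \eqref{E:gamma123} is then purely cosmetic.

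Once $H$ and $\alpha$ are left untouched, your worry about the codomain largely evaporates: $g_2$ is literally ``read the three cycles back into one in the given order,'' and the verification that the result lies in $U^A_{g+2,n+2}$ is just the observation that the concatenation order forces $\alpha(h_1^2)<_\gamma h_1^2<_\gamma\alpha(h_1^3)<_\gamma h_1^3$. The paper does not attempt to characterise the image inside $X_3(n+2)$; it simply records $g_2\circ c_2=\mathrm{id}$ and $c_2\circ g_2=\mathrm{id}$ on the relevant domains, which is all that is used downstream.
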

\begin{proof}
By assumption we have
$$
\alpha(h_1^2)<_{\gamma}h_1^2 <_{\gamma} \alpha(h_1^3) <_{\gamma} h_1^3,
$$
whence the face of $u_{g+2, n+2}$ can be written as in eq.~(\ref{E:face}).
We use $(\alpha(h_1^i), h_1^i), i=2,3$ and $\bar{\gamma}_{i}, i=1, 2, 3$ which are given by  ~\ref{E:3GHs},
then concatenate the sequence of half-edges of $(R_1)$, $\bar{\gamma}_{3}$ and $(S_1)$ to form
\begin{equation}\label{E:h-bc}
\begin{split}
&\gamma_1=\bar{\gamma}_{1}, \quad
\gamma_2=\bar{\gamma}_{2}, \quad\\
&\gamma_3=(R_1, h_1^{3}+1, \ldots, 2(n+1), S_1)
\end{split}
\end{equation}
and relabel the cycles as in eq.~(\ref{E:gamma123}).
This produces the plants $(S_1)$, $(S_2)$ and $(S_3)$.
Since $\prod_{i=1}^3 \gamma_i\in S_{2n+2}$, $c_2(H, \alpha, \gamma)$
is a $3$-cellular map, $c_2$ is well-defined, see Fig.~\ref{F:a-cutting}.

\begin{figure}[ht]
\begin{center}
\includegraphics[width=0.7\columnwidth]{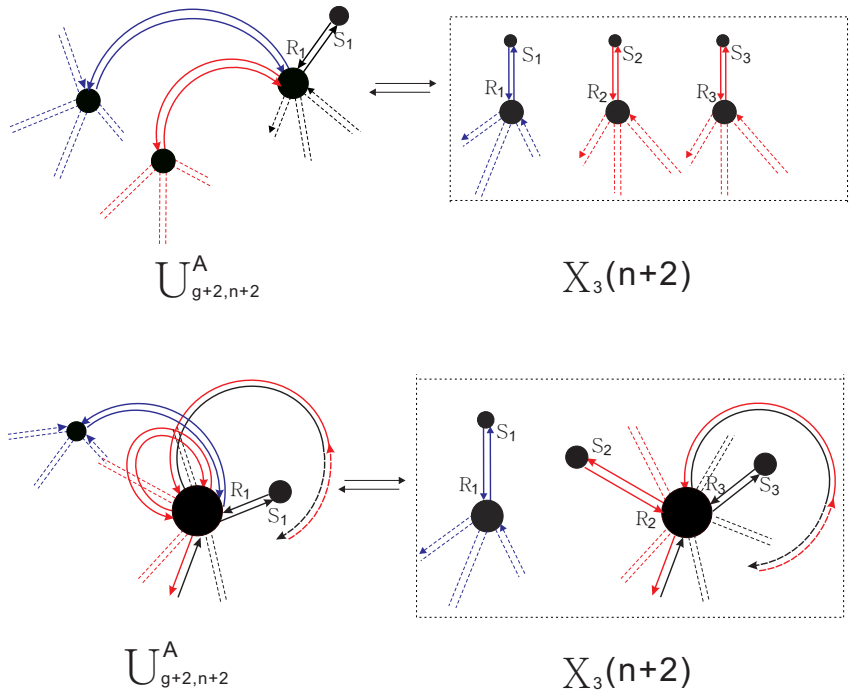}
\end{center}
\caption{\small The mappings $c_2$ and $g_2$.}\label{F:a-cutting}
\end{figure}

We next construct an explicit inverse of $c_2$. Suppose we are given a
$3$-cellular map $c_2((H,\alpha,\gamma))$, in which the $\gamma_i$ are
as in eq.~(\ref{E:h-bc}). Then we concatenate the sequences of
half-edges of the three $\gamma_i$-cycles and relabel as in
eq.~(\ref{E:face}), i.e.~$\gamma(h_1^2)=\alpha(h_1^3)$,
$\gamma(h_1^3)=h_1^3+1$ and $\gamma(R_1)=h_1^3+1$.
We derive, by construction,
$$
\alpha(h_1^2)<_{\gamma}h_1^2 <_{\gamma} \alpha(h_1^3) <_{\gamma} h_1^3.
$$
Accordingly, $g_2(c_2(H, \alpha, \gamma))=(H, \alpha,\gamma)$
is a unicellular map of genus $(g+2)$ with property $(A)$.
\end{proof}

\begin{lemma}\label{L:contract2} {\bf (Contraction)}
Suppose $u \in U_{g+2, n+2}$ has a one-sided edge
$(\alpha(h_1^{l_1}), h_1^{l_1})$, $\alpha(h_1^{l_1})<_{\gamma} h_1^{l_1}$,
such that $\alpha(h_1^{l_1})$ and $h_1^{l_1}$ are incident to two
different vertices $v_j,v_1$. Relabeling the two half-edges
we can write the face
\begin{equation}\label{E:bc-e2}
\gamma=(R_1, K_1, \alpha(h_1^{l_1}), K_2, h_1^{l_1}, K_3, S_1).
\end{equation}
Here either $K_1=k_1^1, \ldots, k_1^{n_1}$ or $K_1=\varnothing$,
$K_2=k_2^1, \ldots, k_2^{n_2}$ or $K_2=\varnothing$ and either
$K_3=k_3^1, \ldots, k_3^{n_3}$ or $K_3=\varnothing$.
Then $u$ corresponds to a unicellular map $u'$ together with two
distinguished half-edges via mapping
\begin{equation}\label{E:m_2}
\begin{split}
&m_2\colon U_{g+2, n+2}  \longrightarrow U_{g+2, n+1}, \\
&((H, \alpha,\gamma), (\alpha(h_1^{l_1}), h_1^{l_1})) \mapsto
((H',\alpha',\gamma'), (\alpha(h_1^{l_1})-1, h_1^{l_1}-1))
\end{split}
\end{equation}
where $H'=H \setminus\{h_1^{l_1}, \alpha(h_1^{l_1})\}$,
$\alpha'=\alpha \setminus (h_1^{l_1}, \alpha(h_1^{l_1})$,
$\gamma'=(R_1, K_1, K_2, K_3, S_1)$ and
$\alpha(h_1^{l_1})-1=k_1^{n_1}, h_1^{l_1}-1=k_2^{n_2}$, if
$K_1,K_2 \neq \varnothing$,
$\alpha(h_1^{l_1})-1=h_1^{l_1}-1=k_1^{n_1}$, if $K_1 \neq \varnothing$ and
$K_2 =\varnothing$,
$\alpha(h_1^{l_1})-1=R_1$,  $h_1^{l_1}-1=k_2^{n_1}$, if $K_1 = \varnothing$ and
$K_2 \neq \varnothing$,
and finally
$\alpha(h_1^{l_1})=h_1^{l_1}=R_1$, if $K_1,K_2 = \varnothing$.\\
Furthermore the mapping $e_2$
\begin{equation}\label{E:e_2}
\begin{split}
& e_2 \colon U_{g+2, n+1}  \longrightarrow U_{g+2, n+2}, \\
& ((H',\alpha',\gamma'), (\alpha(h_1^{l_1})-1, h_1^{l_1}-1))\mapsto
((H,\alpha,\gamma), (\alpha(h_1^{l_1}), h_1^{l_1}))
\end{split}
\end{equation}
has the property $e_2\circ m_2=\text{\rm id}$.
\end{lemma}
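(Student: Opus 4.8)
The plan is to verify that $m_2$ is well-defined as a map into $U_{g+2,n+1}$ and then construct the explicit inverse-on-the-left $e_2$, checking that $e_2\circ m_2=\mathrm{id}$; the genus claim is the crux and will require a Euler-characteristic bookkeeping. First I would confirm that the face $\gamma$ can indeed be written in the form of eq.~(\ref{E:bc-e2}): since $(\alpha(h_1^{l_1}),h_1^{l_1})$ is a one-sided edge, both half-edges lie in the unique face, and $\alpha(h_1^{l_1})<_\gamma h_1^{l_1}$ fixes the relative order, so the remaining half-edges split into the three (possibly empty) blocks $K_1,K_2,K_3$. Deleting the two half-edges and concatenating $R_1,K_1,K_2,K_3,S_1$ produces a single cycle $\gamma'$ on $H'=H\setminus\{h_1^{l_1},\alpha(h_1^{l_1})\}$, which is again a planted unicellular map with one fewer $(np)$-edge; the relabelling rules in the statement are just the explicit description of the new positions of the block-endpoints after the two half-edges are removed, and I would check each of the four stated cases ($K_1,K_2$ both nonempty; only $K_1$; only $K_2$; both empty) directly against the concatenation.

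Next I would compute the genus of $u'=(H',\alpha',\gamma')$. Here I would use the Euler characteristic: for a unicellular map with $V$ vertices, $E$ edges, one face and genus $g'$ one has $V-E+1=2-2g'$. Since $\alpha(h_1^{l_1})$ and $h_1^{l_1}$ are incident to two \emph{distinct} vertices $v_j$ and $v_1$, contracting (deleting) this one-sided edge merges $v_j$ and $v_1$ into a single vertex — more carefully, removing the two half-edges from their respective $\sigma$-cycles and letting $\sigma'=\alpha'\circ\gamma'$ produces exactly one fewer vertex. Thus $V'=V-1$, $E'=E-1$, and the face count stays at $1$, so $V'-E'+1=V-E+1$, i.e.\ the genus is unchanged: $u'\in U_{g+2,n+1}$. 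I would make this vertex-count claim precise by tracking how $\sigma=\alpha\circ\gamma$ changes: the half-edge $\gamma^{-1}(\alpha(h_1^{l_1}))$ (the end of $R_1$-or-$K_1$) now maps under $\gamma'$ to the start of $K_2$, and similarly the end of $K_2$ connects across the gap left by $h_1^{l_1}$, and one checks that $v_j$ (the vertex of $\alpha(h_1^{l_1})$, distinct from $v_1$) gets spliced into $v_1$ while all other vertices are untouched.

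For the inverse, $e_2$ takes $(H',\alpha',\gamma')$ together with the two marked half-edges $\alpha(h_1^{l_1})-1$ and $h_1^{l_1}-1$, reinserts two new half-edges immediately after these markers in the face-cycle, declares them an $\alpha$-edge, and relabels; by construction this undoes the concatenation, so $e_2\circ m_2=\mathrm{id}$. The main obstacle I anticipate is \emph{not} the existence of $e_2$ but the precise verification that the genus is preserved and that $\sigma'$ really has exactly $V-1$ cycles — i.e.\ that deleting a one-sided edge whose two ends sit on two different vertices always merges those two vertices and never, say, splits a vertex or changes the number of faces. The hypothesis that the edge is one-sided (both ends in the single face) is exactly what keeps the face count at $1$, and the hypothesis that the two ends lie on distinct vertices $v_j\neq v_1$ is exactly what forces the vertex count to drop by one rather than stay the same; I would spell out both of these implications carefully, since everything else (the relabelling, the case analysis, the identity $e_2\circ m_2=\mathrm{id}$) is routine bookkeeping once the cycle structure of $\sigma'$ is pinned down. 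Note that only $e_2\circ m_2=\mathrm{id}$ is claimed — not that $e_2$ is a two-sided inverse — because $m_2$ is not surjective (the image consists of those pairs where reinserting the edge lands back in property-$(A)$-type configurations), so I would not attempt to prove $m_2\circ e_2=\mathrm{id}$.
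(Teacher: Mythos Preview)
Your proposal is correct and follows the same approach as the paper. The paper's own proof is a single sentence---it simply asserts that $m_2((H,\alpha,\gamma),(\alpha(h_1^{l_1}),h_1^{l_1}))$ is ``by construction unicellular and retains the genus of $(H,\alpha,\gamma)$''---so your write-up actually supplies the Euler-characteristic bookkeeping and the vertex-merging verification that the paper leaves implicit.
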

\begin{proof}
$m_2((H,\alpha,\gamma), (\alpha(h_1^{l_1}), h_1^{l_1}))$ is by construction unicellular
and retains the genus of $(H,\alpha,\gamma)$.
\end{proof}
We describe the contraction in Fig.~\ref{F:contract}.
\begin{figure}[ht]
\begin{center}
\includegraphics[width=0.6\columnwidth]{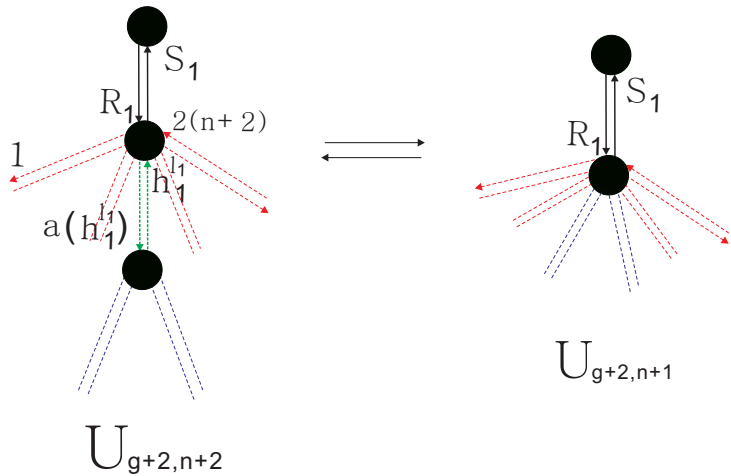}
\end{center}
\caption{\small The mappings $m_2$ and $e_2$. The edge $(\alpha(h_1^{l_1}), h_1^{l_1})$
(green) is one-sided edge.}\label{F:contract}
\end{figure}

\begin{lemma}\label{L:beta-cutting}{\bf (Deletion)}
Given a unicellular map $u=(H,\alpha,\gamma)\in U_{g+2,n+2}^B$ with face
\begin{equation}\label{E:delete-face1}
\gamma=(R_1, \alpha(h_1^2), K_1, h_1^3, K_2, h_1^2, \alpha(h_1^3), K_3, S_1),
\end{equation}
where $K_1=k_1^1, \ldots, k_1^{n_1}$ or $K_1=\varnothing$,
$K_2=k_2^1, \ldots, k_2^{n_2}$ or $K_2=\varnothing$ and
$K_3=k_3^1, \ldots,k_3^{n_3}$ or $K_3=\varnothing$.\\
Then $u$ corresponds to a unicellular map $u'=(H', \alpha', \gamma') \in
U_{g+1, n}$ together with two half-edges $k_{l_1}$ and $k_{l_2}$, where
$k_{l_1} \leq_{\gamma} k_{l_2}$, via the mapping
\begin{equation}\label{E:r2}
\begin{split}
& r_2 \colon U^B_{g+2, n+2} \longrightarrow U_{g+1, n}\\
& ((H,\alpha,\gamma)) \mapsto ((H',\alpha',\gamma'), (k_{l_1}, k_{l_2})),
\end{split}
\end{equation}
where $H'=H\setminus \{\alpha(h_1^2), h_1^2, \alpha(h_1^3), h_1^3 \}$,
$\alpha'=\alpha \setminus \{ (h_1^2, \alpha(h_1^2)),
(h_1^3, \alpha(h_1^3))\}$ and
\begin{equation}\label{E:newbc}
\gamma'=(R_1, K_2, K_1, K_3, S_1).
\end{equation}
$r_2$ can be reversed by mapping a unicellular map
$u=(H, \alpha', \gamma')$, together with two arbitrary half-edges
$k_{l_1}$ and $k_{l_2}$ ($k_{l_1} <_{\gamma} k_{l_2}$) as follows:
\begin{equation}\label{E:s2}
\begin{split}
& s_2 \colon U_{g+1, n} \longrightarrow  U^B_{g+2, n+2}   \\
&((H',\alpha',\gamma'), (k_{l_1}, k_{l_2})) \mapsto (H,\alpha,\gamma).
\end{split}
\end{equation}
\end{lemma}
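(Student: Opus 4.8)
The plan is to construct $r_2$ explicitly, verify it lands in $U_{g+1,n}$, then construct $s_2$ and check $s_2 \circ r_2 = \mathrm{id}$. First I would analyze the two-sided edges $(\alpha(h_1^2), h_1^2)$ and $(\alpha(h_1^3), h_1^3)$: by the scenario $(B)$ ordering $\alpha(h_1^2) <_\gamma h_1^3 <_\gamma h_1^2 <_\gamma \alpha(h_1^3)$, and using $\gamma(\alpha(h_1^2)) = h_1^2$ together with $\alpha \circ \sigma(h_1^2) = \alpha(h_1^3)$, one sees that deleting these two edges (removing the four half-edges and the two $\alpha$-pairs) and reconnecting the surviving segments in the order $\gamma' = (R_1, K_2, K_1, K_3, S_1)$ yields a single cycle — i.e.\ the result is again unicellular. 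The reordering $K_1 K_3 \mapsto K_2 K_1 K_3$ is exactly what the ``crossing'' pattern of the two deleted edges forces: in scenario $(B)$ the two edges interleave, so deleting them performs the standard ``un-handle'' move that drops the genus by one. I would make this precise by tracking $\gamma'$ as a product of $\gamma$ with the relevant transpositions, reading off the cycle structure directly from eq.~(\ref{E:delete-face1}).

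Next I would compute the genus. The map $u$ has one face, $2n+2$ $(np)$-edges plus the plant, and some number $V$ of $(np)$-vertices plus the plant; Euler's formula gives $2 - 2(g+2) = V' - E' + F'$ with the appropriate counts. After deletion we have $2$ fewer edges, the same number of faces (still unicellular), and I must check the vertex count drops by exactly $2$ as well so that the Euler characteristic increases by $2$, i.e.\ the genus drops by exactly one to $g+1$. This vertex bookkeeping is the crux: I would argue that $\alpha(h_1^2)$ sits on $v_1$ while $h_1^2$ and $h_1^3$ and $\alpha(h_1^3)$ are distributed among vertices in a way dictated by scenario $(B)$, and that the two deletions merge/eliminate vertices so as to net $-2$. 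The two distinguished half-edges $(k_{l_1}, k_{l_2})$ with $k_{l_1} \le_\gamma k_{l_2}$ should be chosen as the $\gamma'$-images (or $\gamma'$-predecessors) of the points where the cuts were made, so that they record precisely where in $u'$ the two deleted edges must be reinserted — this is the data needed to invert.

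Then I would define $s_2$: given $(H', \alpha', \gamma')$ with the marked pair $(k_{l_1}, k_{l_2})$, reintroduce four new half-edges and two new $\alpha$-pairs, splicing them back into $\gamma'$ at the locations indicated by $k_{l_1}, k_{l_2}$ so as to reconstitute the pattern $\gamma = (R_1, \alpha(h_1^2), K_1, h_1^3, K_2, h_1^2, \alpha(h_1^3), K_3, S_1)$ of eq.~(\ref{E:delete-face1}); by the genus computation run in reverse, this lands in $U^B_{g+2,n+2}$, and one checks $\alpha(h_1^2) <_\gamma h_1^3 <_\gamma h_1^2 <_\gamma \alpha(h_1^3)$, i.e.\ property $(B)$ holds. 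Finally $s_2 \circ r_2 = \mathrm{id}$ follows because the marked pair carries exactly the information erased by the deletion, so reinsertion at the marked spots undoes the splice. I expect the main obstacle to be the vertex count in scenario $(B)$: unlike the cutting lemma (where one face becomes three and the surface is unchanged), here the surgery changes the genus, and one must carefully verify that the interleaved pair of two-sided edges contributes precisely the right number of vertices before and after — in particular ruling out degenerate sub-cases where some $K_i$ is empty or where the deleted edges happen to share a vertex, which is where the bookkeeping could go wrong and where the precise choice of $(k_{l_1}, k_{l_2})$ matters.
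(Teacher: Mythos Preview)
Your overall architecture matches the paper's proof: delete the two interleaved edges, observe that $\gamma'=(R_1,K_2,K_1,K_3,S_1)$ is a single cycle (so the image is unicellular), invoke the Euler characteristic to pin down the genus, record the cut locations as $(k_{l_1},k_{l_2})$, and reverse by splicing. The paper does exactly this, only more tersely, and in addition spells out explicitly which half-edge plays the role of $k_{l_1},k_{l_2}$ in each of the four sub-cases determined by whether $K_1,K_2$ are empty (e.g.\ $k_{l_1}=k_1^{n_1}$, $k_{l_2}=k_2^{n_2}$ in the generic case, degenerating to $k_{l_1}=k_{l_2}$ when $K_2=\varnothing$, etc.). You should make that identification precise rather than leaving it as ``$\gamma'$-images (or $\gamma'$-predecessors)''.

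There is, however, a real arithmetic slip in your genus argument. You write that after deletion there are two fewer edges, the same number of faces, and that you must check the vertex count \emph{drops by exactly $2$} so that the Euler characteristic increases by $2$. But $V-E+F$ with $\Delta V=-2$, $\Delta E=-2$, $\Delta F=0$ gives $\Delta\chi=0$, not $+2$. What you actually need is that the vertex count is \emph{unchanged}: removing the four half-edges $h_1^2,h_1^3,\alpha(h_1^2),\alpha(h_1^3)$ simply shortens the $\sigma$-cycles they sat in (e.g.\ $v_1$ loses $h_1^2,h_1^3$ and goes straight from $R_1$ to $h_1^4$), it does not merge or kill any vertex. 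With $\Delta V=0$, $\Delta E=-2$, $\Delta F=0$ you get $\Delta\chi=+2$, hence genus $g+1$ as required. So the ``main obstacle'' you anticipate---tracking vertices through degenerate $K_i=\varnothing$ sub-cases---is not where the subtlety lies; the vertex count is stable in all cases, and the genus drop comes entirely from the edge deletion. (A minor terminological point: in the paper's language these edges are \emph{one}-sided, not two-sided, since both half-edges of each lie in the unique face $\gamma$.)
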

\begin{proof}
By construction $\gamma'\in S_{2(n-2)}$, $\alpha'$ is a fixed-point free
involution and $H'$ has cardinality $2(n-2)$, whence $r_2((H,\alpha,\gamma))$
is unicellular.
Euler characteristic implies that the genus of $r_2((H,\alpha,\gamma))$
is $(g-1)$. Moreover, we have in case of $K_1,K_2 \neq \varnothing$,
$k_{l_1}=k_1^{n_1}, k_{l_2}=k_2^{n_2}$, in case of $K_1 \neq \varnothing$
$K_2 =\varnothing$, $k_{l_1}=k_{l_2}=k_1^{n_1}$, in case of
$K_1 = \varnothing$ and $K_2 \neq \varnothing$, $k_{l_1}=R_1$,
$k_{l_2}=k_2^{n_1}$, and in case of
$K_1,K_2=\varnothing$ we have $k_{l_1}=k_{l_2}=R_1$, see Fig.~\ref{F:typeB}.

\begin{figure}[ht]
\begin{center}
\includegraphics[width=0.7\columnwidth]{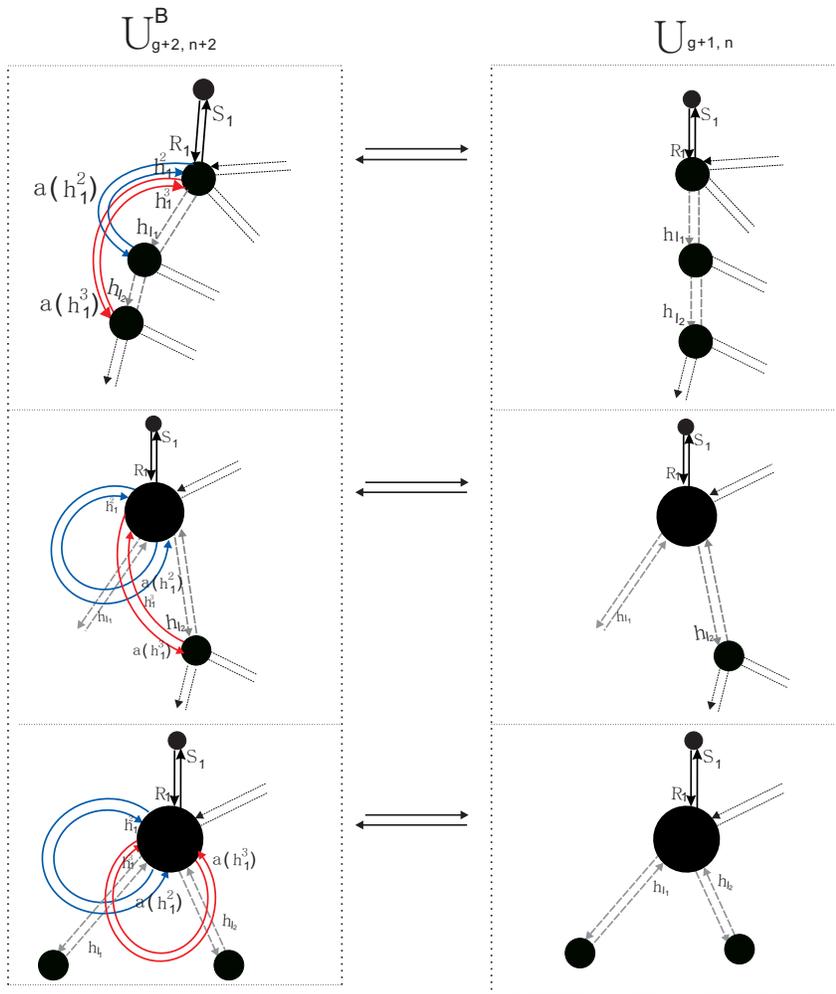}
\end{center}
\caption{\small The mappings $r_2$ and $s_2$. The deleting edges are
$(\alpha(h_1^2), h_1^2)$ (blue) and $(\alpha(h_1^3), h_1^3)$ (red).
The gray dotted lines denote a sequence of half-edges connecting
two vertices.}\label{F:typeB}
\end{figure}
Given a unicellular map $(H', \alpha', \gamma') \in U_{g+1, n}$, there are
$$
{2n+1 \choose 2}+{2n+1 \choose 1}=\frac{(2n+1)(2n)}{2}+2n+1
=(2n+1)(n+1)
$$
ways to choose $(k_{l_1}, k_{l_2})$ such that $k_{l_1} <_{\gamma} k_{l_2}$.
We now select two half-edges $(k_{l_1}, k_{l_2})$ such that $k_{l_1} <_{\gamma}
k_{l_2}$ and insert the pairs of half-edges
$(\alpha(h_1^i), h_1^i)$, $i=2,3$ into the face $\gamma'$. This produces
the face $\gamma$, with
$\gamma(R_1)=\alpha(h_1^2)$, $\gamma(k_{l_2})=h_1^2$,
$\gamma(k_{l_1})=h_1^3$ and $\gamma(h_1^2)=\alpha(h_1^3)$,
$\gamma(h_1^3)=\gamma'(R_1)$ and $\gamma(\alpha(h_1^3))=\gamma'(k_{l_2})$.
Consequently we have
$$
\gamma=(R_1, \alpha(h_1^2), \ldots, k_{l_1}, h_1^3, \ldots, k_{l_2}, h_1^2,
\alpha(h_1^3), \ldots,  S_1).
$$
We then relabel $\gamma$ as in eq.~(\ref{E:delete-face1}). Since $\alpha$ is
a fixed-point free involution and $H$ is a set of cardinality $2(n+2)$,
$s_2 (H', \alpha', \sigma')$ is a unicellular map with property
$B$. Euler characteristic implies $s_2(H', \alpha', \sigma')$ has genus $(g+2)$.
By construction, we have $s_2 \circ r_2=id_{U_{g+2, n+2}^{B}}$.
\end{proof}

\section{The main theorem}
In this section we state some auxiliary bijections and our main result. We furthermore
give in Fig.~\ref{F:algo} an modular description of how our bijection works.

We call a planted $2$-cellular map, whose combinatorial graph is connected,
a planted, bicellular map. Let $B_{g, n}$ denote the set of planted,
bicellular maps of genus $g$ with $n$ $(np)$-edges.

Let $U^{5, i}_{g+2, n+2}$ denote the subset of $U^5_{g+2, n+2}$ in which only a single
$H_{\bar{\gamma}_{i}}$, $i=1,2,3$ is closed and let $U_{g+2,n+2}^{5, 4}$ denote the set of
$U^5_{g+2, n+2}$-elements in which all $H_{\bar{\gamma}_{i}}$, $i=1, 2, 3$ are closed,
i.e.~$U^5_{g+2, n+2}=\dot{\cup}_{i=0}^4 U^{5, i}_{g+2, n+2}$.

\begin{lemma}\label{L:disconnect}
We have the bijections
\begin{eqnarray*}
\eta_{5,i}\colon U^{5,i}_{g+2, n+2} & \longrightarrow &
\dot\bigcup_{0\le g_3 \le g+1,\; 0\le j, k\le n}\left( U_{g_3, j}
\times B_{g+1-g_3, n-j}\right), \quad  1\leq i \leq 3;\\
\eta_{5,4}\colon U^{5,4}_{g+2, n+2} & \longrightarrow &  \bigcup_{0\le g_1, g_2\le g+2,\; 0\le j, k\le n}
U_{g_1, j}\times U_{g_2, k} \times U_{g+2-g_1-g_2, n-j-k}.
\end{eqnarray*}
\end{lemma}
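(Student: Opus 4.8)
### Proof strategy for Lemma~\ref{L:disconnect}

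The plan is to construct the bijections $\eta_{5,i}$ by first applying the cutting map $c_2$ of Lemma~\ref{L:alpha-cutting}, and then interpreting the resulting $3$-cellular map $x_{3,n+2}\in X_3(n+2)$ in terms of the closure structure of its three faces. Recall that $U^5_{g+2,n+2}\subseteq U^{>2}_{g+2,n+2}\subseteq U^A_{g+2,n+2}$, so $c_2$ applies and is invertible via $g_2$. The key observation is that for $u\in U^5_{g+2,n+2}$, the sets $H_{\bar\gamma_1},H_{\bar\gamma_2},H_{\bar\gamma_3}$ together with the half-edges $R_1,S_1$ and the distinguished pairs $(\alpha(h_1^i),h_1^i)$ reorganize precisely into the three faces $\gamma_1,\gamma_2,\gamma_3$ of $c_2(u)$, with $\gamma_1=\bar\gamma_1$, $\gamma_2=\bar\gamma_2$, and $\gamma_3$ absorbing $R_1$, $\bar\gamma_3$ and $S_1$. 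Whether $H_{\bar\gamma_i}$ is closed in $u$ translates directly into whether the corresponding face $\gamma_i$ of $c_2(u)$ is a \emph{closed} sub-fatgraph, i.e.\ whether $\alpha$ restricted to the half-edges of $\gamma_i$ (including its plant $S_i$ for $i=3$, and $R_1$) is a fixed-point-free involution.

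First I would treat the case $1\le i\le 3$, say $i=3$ for concreteness (the cases $i=1,2$ are symmetric after relabelling the branches). Here exactly $H_{\bar\gamma_3}$ is closed, while $H_{\bar\gamma_1}$ and $H_{\bar\gamma_2}$ are not. Closedness of $\gamma_3$ means the face $\gamma_3$ together with its one-sided edges forms a unicellular map $U_{g_3,j}$ on the half-edges of $\bar\gamma_3$ (plus $R_1$, $S_1$), which splits off as a connected component. The remaining two faces $\gamma_1$ and $\gamma_2$, being non-closed, are ``glued'' to each other by the two-sided edges crossing between $H_{\bar\gamma_1}$ and $H_{\bar\gamma_2}$ — since neither is closed, every such face has a partner, forcing the combinatorial graph on these two faces to be connected — so they form a planted bicellular map $B_{g+1-g_3,n-j}$. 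The additive bookkeeping of genus is via Euler's formula: the three components of $c_2(u)$ have total genus $g+2$ coming from one unicellular block (genus $g_3$), one bicellular block (genus $g+1-g_3$, the $+1$ accounting for the genus created by the cutting), and the fact that the $\bar\gamma_1$–$\bar\gamma_2$ block has two boundary components. The inverse map takes a pair $(U_{g_3,j},B_{g+1-g_3,n-j})$, identifies the unicellular map as $\gamma_3$ and the two faces of the bicellular map as $\gamma_1,\gamma_2$, applies $g_2$ to reassemble the unicellular map of genus $g+2$, and one checks this lands in $U^{5,3}_{g+2,n+2}$ (in particular that no $H_{\bar\gamma_i}$ other than the designated one is closed, which is guaranteed by connectivity of the bicellular piece).

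Next I would handle $i=4$: all three $H_{\bar\gamma_i}$ are closed, so each face $\gamma_1,\gamma_2,\gamma_3$ of $c_2(u)$ closes up into its own unicellular map, of genera $g_1,g_2,g+2-g_1-g_2$ and with $j,k,n-j-k$ non-plant edges respectively. The map $\eta_{5,4}$ is then simply $c_2$ followed by recognizing the three closed components, and its inverse is $g_2$ applied after placing three unicellular maps as the three faces — again one must verify the output genus is $g+2$ (Euler characteristic, noting the cutting raises the total genus by $2$ relative to the disjoint union, so $g_1+g_2+(g+2-g_1-g_2)=g+2$ is consistent after accounting for the two extra handles created by $c_2$) and that the output indeed has all three $H_{\bar\gamma_i}$ closed, placing it in $U^{5,4}_{g+2,n+2}$.

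The main obstacle I anticipate is the careful verification of the connectivity and genus bookkeeping in the $1\le i\le 3$ case: one must argue precisely that the two non-closed faces $\gamma_1,\gamma_2$ of $c_2(u)$ cannot themselves split into two separate unicellular pieces (which would contradict ``exactly one $H_{\bar\gamma_i}$ closed'') and hence genuinely form a \emph{connected} bicellular map, and one must track how the genus $2$ created by $c_2$ distributes — $+1$ into the combinatorial genus of the $3$-cellular map as a whole and $+1$ swallowed by merging branches, so that the bicellular block carries genus $g+1-g_3$ rather than $g-g_3$ or $g+2-g_3$. Getting the index ranges for $j,k$ and the genera exactly right (and confirming they match the claimed disjoint unions) will require a short but delicate Euler-characteristic computation for each of the two cases, which I would carry out explicitly in the full proof but only sketch here.
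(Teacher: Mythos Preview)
Your proposal is correct and follows essentially the same route as the paper: apply the cutting map $c_2$ of Lemma~\ref{L:alpha-cutting}, observe that closedness of $H_{\bar\gamma_i}$ determines whether the corresponding face of the resulting $3$-cellular map splits off as a planted unicellular component (and that the non-closed faces are forced to be connected, hence bicellular), do the Euler-characteristic bookkeeping for the genera, and invert via $g_2$. The paper carries out the representative case $i=1$ rather than your $i=3$, but this is immaterial; your informal remarks about ``genus created by $c_2$'' are a bit garbled (in fact the genera simply redistribute, with $g_3+(g+1-g_3)=g+1$ in the bicellular case and $g_1+g_2+(g+2-g_1-g_2)=g+2$ in the all-closed case), but since you explicitly defer the precise computation to the full proof this is not a gap.
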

We prove Lemma~\ref{L:disconnect} in Section~\ref{S:proofs}.
\begin{lemma}\label{L:add-1edge}
There are fours bijections, $\eta_i$ for $i=1, \ldots, 4$,
$$\eta_{1}\colon  U^1_{g+2, n+2} \longrightarrow  U_{g+2, n+1}.$$
$$\eta_{2}\colon  U^2_{g+2, n+2} \longrightarrow U_{g+2, n+1}.$$
$$\eta_{3}\colon  U^3_{g+2, n+2} \longrightarrow U_{g+2, n+1}.$$
$$\eta_{4}\colon  U^4_{g+2, n+2} \longrightarrow U_{g+2, n+1}.$$
\end{lemma}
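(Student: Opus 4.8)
\textbf{Proof proposal for Lemma~\ref{L:add-1edge}.}

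The plan is to realize each $\eta_i$ as an appropriate restriction or composition of the three basic operations of Section~4 --- cutting ($c_2$), contraction ($m_2$) and deletion ($r_2$) --- tailored to the combinatorial type that defines $U^1, U^2, U^3, U^4$. In every case the target is $U_{g+2,n+1}$, so genus must be preserved while exactly one $(np)$-edge disappears; this rules out the deletion map (which drops genus by one and removes two edges) as the main tool and points instead to the contraction map $m_2$ of Lemma~\ref{L:contract2}, which removes a single one-sided edge between two distinct vertices while retaining the genus. The strategy for each $\eta_i$ is therefore: (i) identify a canonical one-sided edge forced by membership in $U^i_{g+2,n+2}$; (ii) check that its two half-edges sit on two distinct vertices (so Lemma~\ref{L:contract2} applies), using the case analysis of scenario $(A)$ and the first lemma of Section~3; (iii) contract it via $m_2$ to land in $U_{g+2,n+1}$; and (iv) exhibit the inverse, which will be an insertion of one new one-sided edge at a distinguished location — essentially a suitably constrained version of $e_2$ whose two auxiliary half-edges are pinned down by the defining condition of $U^i$, so that no extra bookkeeping data is needed and the map is genuinely a bijection onto all of $U_{g+2,n+1}$.

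Concretely, for $\eta_1$ (exactly two of $\bar\gamma_1,\bar\gamma_2,\bar\gamma_3$ empty) the two empty branches force $\bar\gamma_1=(\alpha(h_1^2),h_1^2)$ and $\bar\gamma_2=(\alpha(h_1^3),h_1^3)$, i.e.\ two one-sided edges in canonical position; contracting the one attached to $v_1$ via $m_2$ and observing that the positions $K_1,K_2$ are empty collapses the data to a single unicellular map with $n+1$ edges, and conversely any $u'\in U_{g+2,n+1}$ has a uniquely determined half-edge $R_1$ at which to reinsert the pair, giving $\eta_1^{-1}$. For $\eta_3$ (resp.\ $\eta_4$) the defining condition is precisely $\bar\gamma_1=(\alpha(h_1^2),h_1^2)$ (resp.\ $\bar\gamma_2=(\alpha(h_1^3),h_1^3)$), again a one-sided edge in a prescribed spot, and $\eta_3,\eta_4$ are the corresponding single contractions; the inverses reinsert that one edge at the canonically forced location in the face of the $(n+1)$-edge map. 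For $\eta_2$ (exactly one $\bar\gamma_i$ empty) one of the three branches degenerates, which again pins a one-sided edge; here one must split into the subcases according to which branch is empty and verify that the three resulting contraction maps have disjoint images covering $U_{g+2,n+1}$, or equivalently that the location of reinsertion is read off unambiguously from the target map.

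The main obstacle I expect is step (iv) in each case: proving that the inverse insertion is well defined and that the contraction map is onto $U_{g+2,n+1}$ rather than merely injective. This amounts to showing that, in an arbitrary unicellular map of genus $g+2$ with $n+1$ edges, the location where the removed one-sided edge must be reinserted is canonically determined (e.g.\ always ``at $R_1$'' for $\eta_1$, or at the first half-edge $h_1^2$ of $v_1$ for $\eta_3,\eta_4$), and that inserting there reproduces exactly the defining configuration of $U^i_{g+2,n+2}$ (the correct $\bar\gamma_j$ empty, scenario $(A)$ holding, the edge landing on two distinct vertices). A secondary subtlety is the degenerate small-$m$ boundary cases — in particular $m=2$, which by the discussion in Section~3 belongs to scenario $(A)$ and reduces $\bar\gamma_2,\bar\gamma_3$ to empty branches — which must be checked to be consistent with the $U^1$ description so that no element is double-counted or omitted across $\eta_1,\ldots,\eta_4$. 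Once these compatibility checks are in place, bijectivity of each $\eta_i$ follows immediately from the already-established property $e_2\circ m_2=\mathrm{id}$ together with the uniqueness of the reinsertion site.
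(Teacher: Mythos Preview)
Your overall plan---apply the contraction $m_2$ of Lemma~\ref{L:contract2} to a canonically located one-sided edge and invert via $e_2$---is exactly the paper's approach: its $\eta_1$ is literally the contraction of $(\alpha(h_1^2),h_1^2)$, and $\eta_2,\eta_3,\eta_4$ are dispatched as ``analogous'' with only a figure for support. For $\eta_3$ and $\eta_4$ your reading is on target, and your identification of step~(iv) (surjectivity via a uniquely determined reinsertion site) as the crux is correct; the paper's own argument is in fact just as thin there as yours.

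There is, however, a concrete slip in your treatment of $U^1_{g+2,n+2}$. You assert that ``exactly two of $\bar\gamma_1,\bar\gamma_2,\bar\gamma_3$ empty'' forces the empty pair to be $\bar\gamma_1$ and $\bar\gamma_2$. Nothing in the definition singles those two out: any two of the three branches may be empty (e.g.\ $\bar\gamma_2$ and $\bar\gamma_3$, with $\bar\gamma_1$ long), so your single-case description of $\eta_1$ does not cover $U^1$. The same difficulty contaminates your $\eta_2$ plan: you do split into the three subcases according to which $\bar\gamma_i$ is empty, but then simply assert that the three contraction images are disjoint and cover $U_{g+2,n+1}$. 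That is exactly the content of the bijection and cannot be claimed without an argument; nothing in your outline gives a mechanism for recognizing, from an arbitrary element of $U_{g+2,n+1}$, which of the three subcases it came from. The paper avoids this recombination problem by always contracting the \emph{same} edge $(\alpha(h_1^2),h_1^2)$---available in every element of $U^A_{g+2,n+2}$ since by the first lemma of Section~3 its endpoints lie on distinct vertices---so that only one reinsertion rule is needed rather than three.
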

We prove Lemma~\ref{L:add-1edge} in Section~\ref{S:proofs}.

\begin{lemma}\label{L:add-2edge}
We have the three bijections:
$$\eta_{5}\colon  U^{m,1}_{g+2, n+2} \longrightarrow U_{g+2, n}.$$
$$\eta_{6}\colon  U^{m,2}_{g+2, n+2} \longrightarrow U_{g+2, n}.$$
$$\eta_{7}\colon  U^{m,3}_{g+2, n+2} \longrightarrow U_{g+2, n}.$$
\end{lemma}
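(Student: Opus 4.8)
\textbf{Proof plan for Lemma~\ref{L:add-2edge}.}
The plan is to realize each of the three maps $\eta_5,\eta_6,\eta_7$ as a composition of the basic operations introduced in Section~4, namely cutting ($c_2$), contraction ($m_2$) and deletion ($r_2$), so that in each case exactly two $(np)$-edges are removed and the genus is preserved. First I would treat $\eta_5$ on $U^{m,1}_{g+2,n+2}$. By definition these are $U^2_{g+2,n+2}$-elements (so exactly one $\bar\gamma_i$ is empty) with $\bar\gamma_1=(\alpha(h_1^2),h_1^2)$, i.e.\ $(\alpha(h_1^2),h_1^2)$ is a one-sided edge; since we are in scenario $(A)$, Lemma~1 gives that $\alpha(h_1^2)$ and $h_1^2$ lie on two distinct vertices $v_1,v_2$, so Lemma~\ref{L:contract2} applies and contracting this edge via $m_2$ produces a unicellular map of genus $g+2$ with $n+1$ $(np)$-edges. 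Because $\bar\gamma_1$ being the two-element cycle forces a second distinguished one-sided edge in the resulting configuration (the empty branch among $\bar\gamma_2,\bar\gamma_3$ pins down a canonical second edge incident to two vertices), I would apply $m_2$ a second time, landing in $U_{g+2,n}$; the inverse is given by two applications of $e_2$, where the two insertion positions are reconstructed from the recorded half-edge labels. The maps $\eta_6$ and $\eta_7$ are handled by the same template: for $\eta_6$ the hypothesis $\bar\gamma_2=(\alpha(h_1^3),h_1^3)$ makes $(\alpha(h_1^3),h_1^3)$ the first edge to contract, and for $\eta_7$ both $\bar\gamma_1=(\alpha(h_1^2),h_1^2)$ and $\bar\gamma_2=(\alpha(h_1^3),h_1^3)$ are two-element cycles, so both one-sided edges $(\alpha(h_1^2),h_1^2)$ and $(\alpha(h_1^3),h_1^3)$ are contracted directly, one after the other.

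The key steps, in order, are: (i) verify that the defining condition of each set guarantees that the relevant pair $(\alpha(h_1^j),h_1^j)$ is a one-sided edge whose two half-edges sit on two different vertices, so that Lemma~\ref{L:contract2} is legitimately applicable; (ii) perform the first contraction $m_2$ and check the image is genuinely unicellular of the correct genus $g+2$ and $(np)$-edge count $n+1$ (this is immediate from Lemma~\ref{L:contract2}); (iii) identify inside this intermediate map a canonical second one-sided edge incident to two vertices — this is where the precise membership ($U^{m,1}$ versus $U^{m,2}$ versus $U^{m,3}$) is used — and contract it as well, reaching $U_{g+2,n}$; (iv) define the reverse map by two successive applications of $e_2$, using the two stored pairs of half-edge labels to pin down where the edges are re-inserted, and verify $e_2\circ e_2$ composed appropriately is a two-sided inverse. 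Finally one checks surjectivity: every element of $U_{g+2,n}$ together with the two recorded insertion sites yields, under the double $e_2$, an element with $\bar\gamma_1$ (resp.\ $\bar\gamma_2$, resp.\ both) equal to the two-element cycle, hence lands in the correct domain.

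The main obstacle I anticipate is step (iii): after the first contraction the combinatorial structure changes, and one must argue that the second edge to be contracted is both well-defined (canonically determined, not requiring an extra choice) and still satisfies the ``incident to two distinct vertices'' hypothesis of Lemma~\ref{L:contract2}. For $\eta_7$ this is cleaner, since the two edges $(\alpha(h_1^2),h_1^2)$ and $(\alpha(h_1^3),h_1^3)$ are both present from the start and one only needs that contracting the first does not merge the endpoints of the second onto a single vertex; a short case analysis on whether $\alpha(h_1^3)$ or $h_1^3$ was incident to $v_1$ settles this. For $\eta_5$ and $\eta_6$ the subtlety is that the ``second'' one-sided edge is created by the emptiness of one branch, and one must track carefully, using the linear order $<_\gamma$ and the relabelling conventions of eq.~(\ref{E:relabel}), exactly which edge this is and that it meets two vertices. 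Modulo this bookkeeping, the genus and edge-count assertions are automatic consequences of Lemma~\ref{L:contract2} applied twice, and the bijectivity follows from the stated identity $e_2\circ m_2=\mathrm{id}$ together with a surjectivity check as above.
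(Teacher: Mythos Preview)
Your overall template---remove two $(np)$-edges by applying the Contraction Lemma (Lemma~\ref{L:contract2}) twice, then invert via two applications of $e_2$---is exactly what the paper does, and for $\eta_7$ your description coincides with the paper's argument verbatim.

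The divergence is in the identification of the second edge for $\eta_5$ and $\eta_6$. You propose that after contracting the ``obvious'' edge (the one coming from the two-element cycle $\bar\gamma_1$ or $\bar\gamma_2$), a second contractible edge should be manufactured from the emptiness of one of the remaining branches, and you rightly flag this as the delicate step. The paper sidesteps this entirely: in all three cases $\eta_5,\eta_6,\eta_7$ it contracts the \emph{same} pair of edges, namely $(\alpha(h_1^2),h_1^2)$ and then $(\alpha(h_1^3),h_1^3)$, in that order. Both half-edges $h_1^2,h_1^3$ lie on $v_1$ by definition of the cycle $v_1=(h_1^1,h_1^2,h_1^3,\dots)$, and the paper asserts (with the same level of brevity as in Lemma~\ref{L:add-1edge}) that their $\alpha$-partners lie on distinct vertices $v_i,v_j$, so Lemma~\ref{L:contract2} applies twice. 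The role of the defining conditions of $U^{m,1},U^{m,2},U^{m,3}$ is not to locate the second edge but to guarantee that the double-$e_2$ inverse lands back in the correct subset: the constraints $\bar\gamma_1=(\alpha(h_1^2),h_1^2)$, $\bar\gamma_2=(\alpha(h_1^3),h_1^3)$, together with the branch-emptiness condition, pin down the insertion positions uniquely, which is what makes the map a bijection onto $U_{g+2,n}$ (with no auxiliary labels needed on the target side).

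So the obstacle you anticipated in step~(iii) is self-inflicted: once you contract $(\alpha(h_1^2),h_1^2)$ and $(\alpha(h_1^3),h_1^3)$ directly in every case, there is no need to chase a canonical edge through the intermediate map, and the bookkeeping reduces to the same verification you already outlined for $\eta_7$.
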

We prove Lemma~\ref{L:add-2edge} in Section~\ref{S:proofs}.

A planted $3$-cellular map that is connected as a combinatorial graph is
called a planted tri-cellular map. Let $T_{g, n}$ denote the set of planted,
tricellular maps of genus $g$ with $n$ $(np)$-edges.

\begin{proposition}\label{P:tricellular}
There is a bijection
$$
\theta \colon U^{II}_{g+2, n+2} \longrightarrow T_{g, n}.
$$
\end{proposition}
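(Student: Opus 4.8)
The plan is to use the cutting map $c_2$ from Lemma~\ref{L:alpha-cutting} as the engine: it already sends $U^A_{g+2,n+2}$ bijectively to $X_3(n+2)$, so in particular it sends the subset $U^{II}_{g+2,n+2}$ injectively into $X_3(n+2)$. The whole point of the definition of $U^{II}_{g+2,n+2}$ — the elements of $U^A_{g+2,n+2}$ in which \emph{no} $H_{\bar\gamma_i}$ is closed — is precisely designed to make the image under $c_2$ land in the connected $3$-cellular maps, i.e. in $T_{g,n}$. So the first step is to set $\theta := c_2|_{U^{II}_{g+2,n+2}}$ and verify that $\theta$ is well-defined with image inside $T_{g,n}$: one must show (a) the combinatorial graph of $c_2(u)$ is connected for $u\in U^{II}_{g+2,n+2}$, and (b) conversely, $g_2$ carries every planted tricellular map back into $U^{II}_{g+2,n+2}$, i.e. back into $U^A$ \emph{and} with no $H_{\bar\gamma_i}$ closed.

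\textbf{Key steps.} First I would recall from the construction of $c_2$ that the three faces $\gamma_1=\bar\gamma_1$, $\gamma_2=\bar\gamma_2$, $\gamma_3=(R_1,h_1^3+1,\dots,2(n+1),S_1)$ of $c_2(u)$ partition $H$, and that the underlying involution $\alpha$ is unchanged. The combinatorial graph of $c_2(u)$ is connected if and only if no proper nonempty union of the half-edge sets of the faces is $\alpha$-closed; equivalently, since there are only three faces, connectivity fails exactly when some single $H_{\gamma_i}$ (equivalently, by the relabeling, some $H_{\bar\gamma_i}$ up to the swap of $(R_1,S_1)$ into $\gamma_3$, which does not change closedness because the plant $(p_i)$ is always a singleton $\sigma$-cycle whose edge $(R_i,S_i)$ stays inside $\gamma_i$) is $\alpha$-closed, or a union of two of them is. But a union of two $H_{\bar\gamma_i}$ is $\alpha$-closed iff its complement — the third $H_{\bar\gamma_j}$ — is $\alpha$-closed; so in all cases disconnectedness of $c_2(u)$ is equivalent to \emph{some} $H_{\bar\gamma_i}$ being closed. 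Hence $u\in U^{II}$ (no $H_{\bar\gamma_i}$ closed) if and only if $c_2(u)$ is connected, which is exactly $c_2(u)\in T_{g,n}$. The genus count is immediate: $c_2$ does not change $\alpha$ or $H$ and just splits one face into three, so Euler's formula gives genus $g$ for the image (matching the indexing $T_{g,n}$), and $c_2(u)$ plainly has $n+2$ edges of which $n$ are $(np)$-edges.

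\textbf{Finishing.} For bijectivity I would invoke that $g_2=c_2^{-1}$ on all of $X_3(n+2)$ (Lemma~\ref{L:alpha-cutting}); restricting to $T_{g,n}\subseteq X_3(n+2)$ and using the equivalence just proved, $g_2$ maps $T_{g,n}$ bijectively onto $U^{II}_{g+2,n+2}$, so $\theta$ and $\theta^{-1}:=g_2|_{T_{g,n}}$ are mutually inverse. \textbf{The main obstacle} is the careful bookkeeping in step two: one must check that ``connected as a combinatorial graph'' for a $3$-face map really does reduce cleanly to ``no $H_{\bar\gamma_i}$ is $\alpha$-closed,'' handling the degenerate cases where one or two of the $\bar\gamma_i$ are empty (an empty branch contributes nothing and is vacuously closed, so those configurations are correctly excluded from $U^{II}$ and indeed give disconnected — in fact non-$3$-cellular — images), and verifying that the relabeling moving $(R_1),(S_1)$ into $\gamma_3$ does not spuriously create or destroy closedness. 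Once that combinatorial equivalence is nailed down, everything else is a direct appeal to Lemma~\ref{L:alpha-cutting} plus an Euler-characteristic genus check.
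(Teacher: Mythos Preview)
Your proposal is correct and follows essentially the same route as the paper: define $\theta$ as the restriction of the cutting map $c_2$ of Lemma~\ref{L:alpha-cutting}, use the defining property of $U^{II}_{g+2,n+2}$ (no $H_{\bar\gamma_i}$ closed) to force connectedness of the image, read off genus $g$ and $n$ $(np)$-edges via Euler characteristic, and obtain bijectivity from $g_2=c_2^{-1}$. If anything, you are more explicit than the paper in arguing the equivalence ``connected $\Leftrightarrow$ no $H_{\bar\gamma_i}$ closed'' (via the complement observation) and in handling the empty-branch degeneracies; the paper simply asserts the forward implication and then says injectivity and surjectivity are ``implied by the Cutting Lemma.''
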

We prove Proposition~\ref{P:tricellular} in Section~\ref{S:proofs}.
\begin{proposition}\label{P:insert2edges}
There is a bijection
$$
\psi \colon U^{B}_{g+2, n+2} \longrightarrow (2n+1)(n+1)U_{g+1, n}.
$$
\end{proposition}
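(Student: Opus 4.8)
The plan is to realize $\psi$ as the composition of the deletion map $r_2$ from Lemma~\ref{L:beta-cutting} with a bookkeeping of the data it produces. Recall that for $u\in U^B_{g+2,n+2}$ the map $r_2$ returns a pair $\big((H',\alpha',\gamma'),(k_{l_1},k_{l_2})\big)$ with $(H',\alpha',\gamma')\in U_{g+1,n}$ and $k_{l_1}\le_\gamma k_{l_2}$; moreover Lemma~\ref{L:beta-cutting} already provides the inverse $s_2$ with $s_2\circ r_2=\mathrm{id}_{U^B_{g+2,n+2}}$. So the one thing left to check is that $r_2$ is onto the whole space of such pairs, i.e.\ that every choice of a unicellular map in $U_{g+1,n}$ together with an \emph{arbitrary} ordered pair of (possibly equal) half-edges arises. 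This is precisely the content of the computation in the proof of Lemma~\ref{L:beta-cutting}: from $(H',\alpha',\gamma')$ one inserts the two one-sided edges $(\alpha(h_1^i),h_1^i)$, $i=2,3$, at the positions dictated by $k_{l_1},k_{l_2}$, producing a genuine element of $U^B_{g+2,n+2}$ whose image under $r_2$ is the pair we started with; hence $r_2\circ s_2=\mathrm{id}$ as well, and $r_2$ is a bijection onto its pair-space.

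First I would fix the target set precisely. The face $\gamma'$ of an element of $U_{g+1,n}$ has $2n+2$ half-edges $R_1,1,2,\dots,2n,S_1$ (in the planted labelling of eq.~(\ref{E:relabel})), and the admissible second coordinates are ordered pairs $(k_{l_1},k_{l_2})$ with $k_{l_1}<_\gamma k_{l_2}$ \emph{or} $k_{l_1}=k_{l_2}$. The count of such pairs is $\binom{2n+1}{2}+\binom{2n+1}{1}=(2n+1)(n+1)$, exactly as displayed in the proof of Lemma~\ref{L:beta-cutting}. Thus the disjoint union of $(2n+1)(n+1)$ copies of $U_{g+1,n}$, one copy for each admissible pair, is in canonical bijection with $\bigcup_{u'\in U_{g+1,n}}\{u'\}\times\{\text{admissible pairs for }u'\}$, and I would take \emph{this} latter set as the concrete meaning of ``$(2n+1)(n+1)U_{g+1,n}$''.

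With that identification in hand, I would simply set $\psi:=r_2$ and $\psi^{-1}:=s_2$. The key steps are then: (i) $\psi$ is well defined with the asserted codomain --- immediate from Lemma~\ref{L:beta-cutting}, since $r_2(u)\in U_{g+1,n}\times\{\text{pairs}\}$ and the genus drop is forced by Euler characteristic; (ii) $\psi^{-1}\circ\psi=\mathrm{id}$ --- this is exactly $s_2\circ r_2=\mathrm{id}_{U^B_{g+2,n+2}}$ from the lemma; (iii) $\psi\circ\psi^{-1}=\mathrm{id}$ --- here one re-reads the reconstruction in Lemma~\ref{L:beta-cutting}: starting from $(u',(k_{l_1},k_{l_2}))$, the map $s_2$ inserts $\alpha(h_1^2),h_1^3$ just after $R_1$ and after $k_{l_1}$ respectively, and $h_1^2,\alpha(h_1^3)$ just after $k_{l_2}$, yielding a face of the form (\ref{E:delete-face1}) in property $B$; applying $r_2$ deletes exactly the two edges just inserted and records exactly the pair $(k_{l_1},k_{l_2})$, recovering the input.

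The main obstacle, and the only place that needs care, is step (iii): one must verify that the four boundary cases for $(K_1,K_2)$ being empty or not (the cases $k_{l_1}=R_1$ and $k_{l_1}=k_{l_2}$) are handled consistently, so that the relabelling in (\ref{E:delete-face1}) is unambiguous and the positions at which $s_2$ inserts the half-edges are exactly the positions from which $r_2$ removes them. Since Lemma~\ref{L:beta-cutting} already spells out all four cases for both $r_2$ and $s_2$, checking (iii) amounts to matching those case lists against each other; no new construction is required. I would therefore present the proof as: (a) define the codomain as the indexed disjoint union above; (b) invoke Lemma~\ref{L:beta-cutting} to get $\psi=r_2$, $\psi^{-1}=s_2$, and $\psi^{-1}\circ\psi=\mathrm{id}$; (c) verify $\psi\circ\psi^{-1}=\mathrm{id}$ by the case analysis just described; (d) conclude $|U^B_{g+2,n+2}|=(2n+1)(n+1)\,|U_{g+1,n}|$.
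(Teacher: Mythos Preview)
Your proposal is correct and follows essentially the same approach as the paper, which simply states that the proposition follows directly from Lemma~\ref{L:beta-cutting}. You are in fact more careful than the paper: you explicitly isolate the verification of $r_2\circ s_2=\mathrm{id}$ (step (iii) and the four boundary cases), whereas the paper's one-line proof tacitly assumes this is contained in the lemma.
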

We prove Proposition~\ref{P:tricellular} in Section~\ref{S:proofs}.
In Figure~\ref{F:algo} we give an overview of how the above bijections are applied.

\begin{figure}
\begin{center}
\includegraphics[width=0.9\columnwidth]{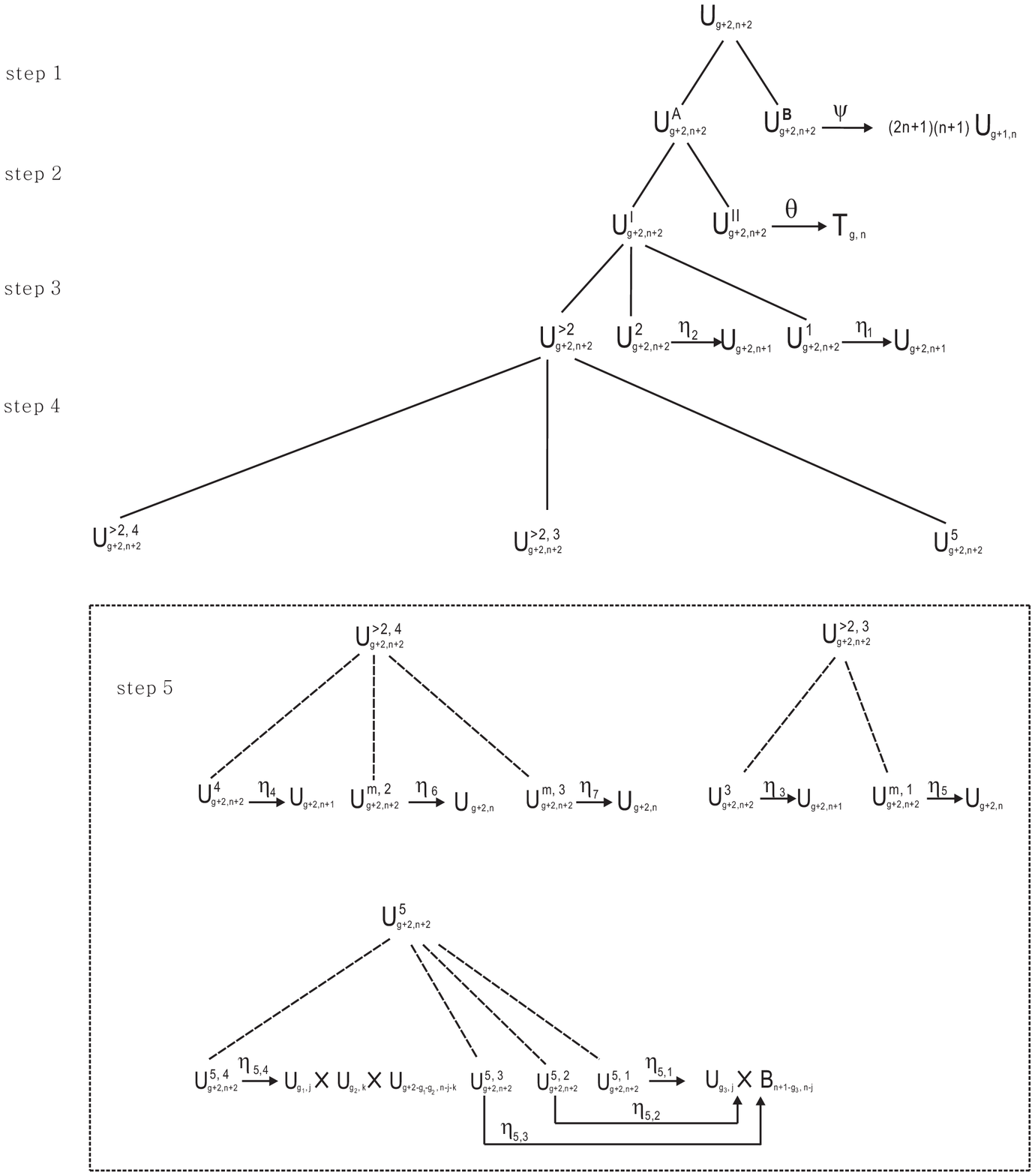}
\end{center}
\caption{\small Applying the bijections.}
\label{F:algo}
\end{figure}

For a set $A^{\xi}_{g, n}$ we denote its cardinality by ${\sf a}^{\xi}_{g}(n)$.

\begin{theorem}\label{T:result}
\begin{equation}\label{E:result}
\begin{split}
& {\sf u}_{g+2}(n+2)=\\
& {\sf t}_{g}^{}(n)+{\sf d}_{g+2}(n)+4{\sf u}_{g+2}(n+1)-
3{\sf u}_{g+2}(n)+(n+1)(2n+1){\sf u}_{g+1}(n),
\end{split}
\end{equation}
where
\begin{equation*}
\begin{split}
&{d}_{g+2}(n)= \\
& 3 \sum_{g_1=0}^{g+1} \sum_{0  \leq m \leq n}
\,{\sf u}^{*}_{g_1}(m) {\sf b}_{g+1-g_1}^{}(n-m)+
\sum_{g_1} \sum_{g_2} \sum_{m_1 \geq 0} \sum_{m_2 \geq 0} {\sf u}^{*}_{g_1}(m_1)
{\sf u}^{*}_{g_2}(m_2) {\sf u}^{*}_{g+2-g_1-g_2}(n-m_1-m_2) ,
\end{split}
\end{equation*}
with
\begin{equation}
{\sf u}^{*}_{g}(n)=
\begin{cases}
0, \quad for \quad g=0 \quad and \quad n=0; \\
{\sf u}_g(n), \quad otherwise.
\end{cases}
\end{equation}
\end{theorem}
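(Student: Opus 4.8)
The plan is to assemble Theorem~\ref{T:result} by counting $U_{g+2,n+2}$ through the disjoint union decomposition built up in Sections~3--5, translating every bijection into an identity of cardinalities and then summing. First I would record the top-level partition. Combining \eqref{E:AB}, \eqref{E:I-II}, \eqref{E:I-II}, \eqref{F:disU} and \eqref{E:geq2} we have
\begin{equation*}
U_{g+2,n+2}=U^{II}_{g+2,n+2}\,\dot\cup\,U^{1}_{g+2,n+2}\,\dot\cup\,U^{2}_{g+2,n+2}\,\dot\cup\,U^{5}_{g+2,n+2}\,\dot\cup\,U^{>2,3}_{g+2,n+2}\,\dot\cup\,U^{>2,4}_{g+2,n+2}\,\dot\cup\,U^{B}_{g+2,n+2},
\end{equation*}
so that taking cardinalities gives
\begin{equation*}
{\sf u}_{g+2}(n+2)={\sf u}^{II}_{g+2}(n+2)+{\sf u}^{1}_{g+2}(n+2)+{\sf u}^{2}_{g+2}(n+2)+{\sf u}^{5}_{g+2}(n+2)+{\sf u}^{>2,3}_{g+2}(n+2)+{\sf u}^{>2,4}_{g+2}(n+2)+{\sf u}^{B}_{g+2}(n+2).
\end{equation*}
Next I would substitute each term using the bijections already proved. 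Proposition~\ref{P:tricellular} gives ${\sf u}^{II}_{g+2}(n+2)={\sf t}_{g}(n)$. Proposition~\ref{P:insert2edges} gives ${\sf u}^{B}_{g+2}(n+2)=(2n+1)(n+1){\sf u}_{g+1}(n)$. Lemma~\ref{L:add-1edge} gives ${\sf u}^{1}_{g+2}(n+2)={\sf u}^{2}_{g+2}(n+2)={\sf u}^{3}_{g+2}(n+2)={\sf u}^{4}_{g+2}(n+2)={\sf u}_{g+2}(n+1)$, and Lemma~\ref{L:add-2edge} gives ${\sf u}^{m,1}_{g+2}(n+2)={\sf u}^{m,2}_{g+2}(n+2)={\sf u}^{m,3}_{g+2}(n+2)={\sf u}_{g+2}(n)$.

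The one slightly delicate bookkeeping step is handling the $U^{>2,3}$ and $U^{>2,4}$ pieces, because these are not images of bijections directly but are presented in \eqref{E:U-g2-3} and \eqref{E:U-g3-4} as \emph{set differences}. Since $U^{m,1}_{g+2,n+2}\subseteq U^{3}_{g+2,n+2}$ and $U^{m,2}_{g+2,n+2}\,\dot\cup\,U^{m,3}_{g+2,n+2}\subseteq U^{4}_{g+2,n+2}$ (one should double-check these containments from the definitions, which is immediate: an element of $U^{m,1}$ lies in $U^{2}\subseteq U^{I}$ and satisfies $\bar\gamma_1=(\alpha(h_1^2),h_1^2)$, hence in $U^{3}$; similarly for the others), inclusion--exclusion yields
\begin{equation*}
{\sf u}^{>2,3}_{g+2}(n+2)={\sf u}^{3}_{g+2}(n+2)-{\sf u}^{m,1}_{g+2}(n+2)={\sf u}_{g+2}(n+1)-{\sf u}_{g+2}(n),
\end{equation*}
and, using that $U^{m,2}$ and $U^{m,3}$ are disjoint,
\begin{equation*}
{\sf u}^{>2,4}_{g+2}(n+2)={\sf u}^{4}_{g+2}(n+2)-{\sf u}^{m,2}_{g+2}(n+2)-{\sf u}^{m,3}_{g+2}(n+2)={\sf u}_{g+2}(n+1)-2{\sf u}_{g+2}(n).
\end{equation*}
Adding all seven contributions now produces $4{\sf u}_{g+2}(n+1)$ from the four copies of ${\sf u}_{g+2}(n+1)$ (two from $U^{1},U^{2}$ and one each from the two set-difference terms), and $-3{\sf u}_{g+2}(n)$ from the corrections, so that
\begin{equation*}
{\sf u}_{g+2}(n+2)={\sf t}_{g}(n)+{\sf u}^{5}_{g+2}(n+2)+4{\sf u}_{g+2}(n+1)-3{\sf u}_{g+2}(n)+(n+1)(2n+1){\sf u}_{g+1}(n),
\end{equation*}
and comparison with \eqref{E:result} forces the identification ${\sf d}_{g+2}(n)={\sf u}^{5}_{g+2}(n+2)$.

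The main obstacle, then, is to evaluate ${\sf u}^{5}_{g+2}(n+2)$ and show it equals the stated expression for ${\sf d}_{g+2}(n)$. Here I would use the refinement $U^{5}_{g+2,n+2}=\dot{\cup}_{i=0}^{4}U^{5,i}_{g+2,n+2}$ and Lemma~\ref{L:disconnect}. The cases $i=1,2,3$ each contribute, via $\eta_{5,i}$, a sum $\sum_{g_3,j}{\sf u}_{g_3}(j){\sf b}_{g+1-g_3}(n-j)$; summing over the three values of $i$ gives the factor $3$ in the first term of ${\sf d}_{g+2}(n)$, and the passage from ${\sf u}$ to ${\sf u}^{*}$ reflects that the branch in question, being closed and nonempty in the relevant sense, cannot be the genus-zero edgeless map $\epsilon$ (this is exactly what the definition of $U^{5}$ via $m,l\ge 4$ enforces — each $\bar\gamma_i$ carries at least one $(np)$-edge). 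The case $i=4$ contributes, via $\eta_{5,4}$, the triple sum $\sum_{g_1,g_2}\sum_{m_1,m_2}{\sf u}^{*}_{g_1}(m_1){\sf u}^{*}_{g_2}(m_2){\sf u}^{*}_{g+2-g_1-g_2}(n-m_1-m_2)$, which is exactly the second term. The case $i=0$ (no $H_{\bar\gamma_i}$ closed) contributes nothing to $U^{5}$ since by definition $U^{5}\subseteq U^{I}$ and $U^{I}$ consists of elements with at least one closed branch, so $U^{5,0}_{g+2,n+2}=\varnothing$. Care is needed in checking that the index ranges in Lemma~\ref{L:disconnect} (the stray variable $k$, the constraints $0\le j,k\le n$) collapse correctly to the ranges written in ${\sf d}_{g+2}(n)$, and that the $g$-ranges match after accounting for $g_3=g+1-(g+1-g_3)$; modulo these routine range verifications, summing the five cases gives ${\sf u}^{5}_{g+2}(n+2)={\sf d}_{g+2}(n)$ and completes the proof.
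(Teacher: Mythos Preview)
Your proposal is correct and follows essentially the same route as the paper's own proof: you assemble the partition from \eqref{E:AB}, \eqref{E:I-II}, \eqref{F:disU}, \eqref{E:geq2}, \eqref{E:U-g2-3}, \eqref{E:U-g3-4}, invoke Lemmas~\ref{L:disconnect}--\ref{L:add-2edge} and Propositions~\ref{P:tricellular}--\ref{P:insert2edges} for each block, and sum. Your write-up is in fact more explicit than the paper's in justifying the set-difference bookkeeping for $U^{>2,3}$ and $U^{>2,4}$, in explaining why $U^{5,0}_{g+2,n+2}=\varnothing$, and in tracing the passage from ${\sf u}$ to ${\sf u}^{*}$ back to the constraint $m,l\ge 4$ defining $U^{5}$; the paper leaves these points implicit.
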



\section{Proofs}\label{S:proofs}

{\bf Proof of Lemma~\ref{L:disconnect}}
\begin{proof}
{\bf Claim $1$:} The mapping
\begin{equation*}
\begin{split}
&\eta_{5,1}\colon U^{5,1}_{g+2, n+2}  \longrightarrow
   \dot\bigcup_{0\le g_3 \le g+1,\; 0\le j, k\le n}\left( U_{g_3, j}\times
    B_{g+1-g_3, n-j}\right),  \\
&   u_{g+2, n+2}\mapsto (u_{g_3, j}, b_{g+1-g_3, n-j}),
        \quad 0\leq g_3 \leq g+1, 1\leq j \leq n
\end{split}
\end{equation*}
is a bijection.
We first prove that $\eta_{5,1}$ is welldefined.
For a planted unicellular map $(H, \alpha, \gamma)=u_{g+2, n+2} \in U^{5, 1}_{g+2, n+2}$ with
face
\begin{equation*}
\gamma=(R_1, \alpha(h_1^{2}), \ldots, h_1^{2},
\alpha(h_1^{3}), \ldots, h_1^{3}, h_1^{3}+1, \ldots, 2(n+1), S_1),
\end{equation*}
we employ the mapping $c_2$ of the Cutting-Lemma (Lemma~\ref{L:alpha-cutting})
in order to decompose $u_{g+2, n+2}$ into a planted $3$-cellular map, $x_{3, n+2}=(H, \alpha,
(\gamma_i)_{1\le i\le 3})$, where
\begin{equation}\label{E:bc-p1}
\begin{split}
&\gamma_1=\bar{\gamma}_{1}, \quad
\gamma_2=\bar{\gamma}_{2}, \quad\\
&\gamma_3=(R_1, h_1^{3}+1, \ldots, 2(n+1), S_1),
\end{split}
\end{equation}
where $\gamma_3$ is obtained by concatenating the sequence of half-edges of $(R_1)$,
$\bar{\gamma}_{3}$ and $(S_1)$.

For any $(H, \alpha, \gamma)=u_{g+2, n+2} \in U^{5, 1}_{g+2, n+2}$,
$H_{\bar{\gamma}_{1}}$ is closed. Since $\gamma_1= \bar{\gamma}_{1}$, the restriction
$\alpha|_{H_{\gamma_{1}}}$ is a fixed-point free involution.
Accordingly,
$(H_{\gamma_{1}}, \alpha|_{H_{\gamma_{1}}}, \gamma_1)$ is a planted unicellular map.

Since $H_{\bar{\gamma}_{2}}\cup H_{\bar{\gamma}_{3}}$ is closed and $\gamma_i$, $i=2,3$
is given in eq.~(\ref{E:bc-p1}), the restriction
$\alpha|_{H_{\gamma_1, \gamma_2}}$ is a welldefined fixed-point free involution.
Furthermore, since neither $H_{\bar{\gamma}_2}$ nor $H_{\bar{\gamma}_3}$ are closed,
$H_{\gamma_2}$ and $H_{\gamma_3}$ are not closed either.
Therefore $(H_{\gamma_1, \gamma_2}, \alpha|_{H_{\gamma_1, \gamma_2}},
(\gamma_i)_{1 \leq i \leq 2})$ is a planted bicellular map with the plants $(S_1)$ and $(h_1^3)$.

Let $u_{g_3, j}=(H_{\gamma_{1}}, \alpha|_{H_{\gamma_{1}}}, \gamma_1)$ and
$b_{g', n'}=(H_{\gamma_1, \gamma_2}, \alpha|_{H_{\gamma_1, \gamma_2}},(\gamma_i)_{1 \leq i \leq 2})$.

Suppose $u_{g+2, n+2}$, $u_{g_3, j}$ and $b_{g', n'}$ have $J$, $J_{\gamma_1}$ and $J_{b}$
vertices, respectively. Then $2-2(g+2)=J-(n+2)+1$ and $2-2g_3=J_{\gamma_1}-j+1$, whence
$$
2-2(g+1-g_3)=J-J_{\gamma_1}-(n-j)+2.
$$
Since the edges incident to plants and plants do not contribute to the number of
edges and vertices, we have
$n'=n-j, 1\leq  j  \leq n$, $J_{b}=J-J_{\gamma_1}$. As a result $b_{g', n'}$ has
genus $(g+1-g_3)$, where $0  \leq  g_3 \leq g+1$, whence $\eta_{5, 1}$ is welldefined.

We next show that $\eta_{5,1}$ is injective. In order to apply the mapping
$c_2$ of the Cutting-Lemma, we introduce
\begin{equation}
\begin{split}
&\zeta_{5, 1}((H_{\gamma_{1}}, \alpha|_{H_{\gamma_{1}}}, \gamma_1),
(H_{\gamma_1, \gamma_2} , \alpha|_{H_{\gamma_1, \gamma_2}},
(\gamma_i)_{1 \leq i \leq 2}))=
(H, \alpha, (\gamma_i)_{1\le i\le 3}),
\end{split}
\end{equation}
where $\gamma_i$ are given by eq.~(\ref{E:bc-p1}),
$\alpha=\alpha|_{H_{\gamma_1, \gamma_2, \gamma_3}}$ and
$H=H_{\gamma_{1}}\cup H_{\gamma_1, \gamma_2}$.

For any
$$
\eta_{5, 1}((H, \alpha, \gamma))=
(u_{g_3, j}, b_{g+1-g_3, n-j})\in \dot\bigcup_{0\le g_3 \le g+1,\; 0\le j, k\le n}
\left( U_{g_3, j}\times B_{g+1-g_3, n-j}\right),
$$
where $u_{g_3, j}=(H_{\gamma_{1}}, \alpha|_{H_{\gamma_{1}}}, \gamma_1)$, and
$b_{g+1-g_3, n-j}=(H_{\gamma_1, \gamma_2} , \alpha|_{H_{\gamma_1, \gamma_2}},
(\gamma_i)_{1 \leq i \leq 2})$,
we apply $\zeta_{5, 1}$. This generates the $3$-cellular map
$(H, \alpha, (\gamma_i)_{1\le i\le 3})$.
Since $u_{g_3, j}$ has $j$ edges and $b_{g+1-g_3, n-j} $ has $(n-j)$ edges, and the process
generates the edges $(\alpha(h_1^2), h_1^2)$ and $(\alpha(h_1^3), h_1^3)$, we have
$x_{3, n+2}=(H, \alpha, (\gamma_i)_{1\le i\le 3})\in X_3(n+2)$.
We can now apply $c_2$ of Lemma~\ref{L:alpha-cutting}, which induces the mapping
$c_{5, 1} \colon X_3(n+2) \longrightarrow U^{5, 1}_{g+2, n+2}$. Lemma~\ref{L:alpha-cutting}
now implies furthermore
$$
(c_{5, 1} \circ \zeta_{5, 1}) \circ \eta_{5, 1}=\text{\rm id},
$$
whence the mapping $\eta_{5,1}$ is injective.

It thus remains to prove that $\eta_{5,1}$ is surjective. This follows again from close
inspection of the proof of the Lemma~\ref{L:alpha-cutting}, which implies
$$
\eta_{5, 1} \circ (c_{5, 1} \circ \zeta_{5, 1})=\text{\rm id}.
$$
Therefore, $\eta_{5,1}$ is surjective and Claim $1$ is completed.

Analogously we prove that $\eta_{5, 2}$ and $\eta_{5, 3}$ are injective.

{\bf Claim $2$}: The mapping
\begin{equation*}
\begin{split}
&\eta_{5,4}: U^{5,4}_{g+2, n+2}  \longrightarrow   \bigcup_{0\le g_1, g_2\le g+2,\; 0\le j, k\le n}
U_{g_1, j}\times U_{g_2, k} \times U_{g+2-g_1-g_2, n-j-k}, \\
& u_{g+2, n+2}  \mapsto (u_{g_1, j}, u_{g_2, k}, u_{g+2-g_1-g_2, n-j-k}),
\end{split}
\end{equation*}
with $1\leq g_1, g_2 \leq g+2$ and $1\leq j, k \leq n$ is a bijection.

We first show that $\eta_{5,4}$ is well-defined. As in the proof of Claim $1$,
we employ the Cutting-Lemma which produces a $3$-cellular map with the boundary
components $(\gamma_1,\gamma_2,\gamma_3)$.

For any $u_{g+2, n+2} \in U^{5,4}_{g+2, n+2}$, each of the $H_{\bar{\gamma}_{i}}$ is closed.
Thus the restrictions $\alpha|_{H_{\gamma_i}}$, for $i=1, 2, 3$ are
welldefined and fixed-point free involutions. As a result, $(H_{\gamma_1},
\alpha|_{H_{\gamma_1}}, \gamma_{1})$, $(H_{\gamma_1}, \alpha|_{H_{\gamma_2}}, \gamma_2)$
and $(H_{\gamma_1}, \alpha|_{H_{\gamma_3}},  \gamma_3)$ are unicellular maps, respectively.

Let $u_{g_1, j}=(H_{\gamma_1}, \alpha|_{H_{\gamma_1}}, \gamma_{1})$,
    $u_{g_2, k}=(H_{\gamma_2}, \alpha|_{H_{\gamma_2}}, \gamma_2)$
and $u_{g', n'}=(H_{\gamma_3}, \alpha|_{H_{\gamma_3}}, \gamma_3)$.
Suppose that $u_{g+2, n+2}$, $u_{g_1, j}$, $u_{g_2, k}$ and $u_{g', n'}$ have $J$,
$J_{\gamma_1}$, $J_{\gamma_2}$ and $J_{\gamma_3}$ vertices, respectively. Then
\begin{equation}\label{E:case1-g1}
2-2(g+2)=J-(n+2)+1, \quad 2-2g_1=J_{\gamma_1}-j+1, \quad \text{and}
\quad 2-2g_2=J_{\gamma_2}-k+1, \quad 1\leq  j, k \leq n.
\end{equation}
Furthermore, we have
\begin{equation*}\label{E:case2genus}
\begin{split}
&J-J_{\gamma_1}-J_{\gamma_2}-(n-j-k)+1=2-2(g+2-g_1-g_2).
\end{split}
\end{equation*}
After applying the Cutting-Lemma, $(h_1^2)$ and $(h_1^3)$ become plants, similarly
$(\alpha(h_1^2), h_1^2)$ and $(\alpha(h_1^3), h_1^3)$ become edges incident to plants.
Thus, we have $n^{'}=n-j-k, 0\le j, k\le n$ and $J_{\gamma_3}=J-2-J_{\gamma_1}-J_{\gamma_2}$
and accordingly obtain
\begin{equation*}
\begin{split}
&J-2-J_{\gamma_1}-J_{\gamma_2}-(n-j-k)+1=2-2(g+2-g_1-g_1).
\end{split}
\end{equation*}
Consequently, $u_{g^{'}, n^{'}}$ has genus $(g+2-g_1-g_2)$, where $0  \leq g_1, g_2 \leq g+2$ and
$\eta_{5, 4}$ is well-defined.

We next prove $\eta_{5, 4}$ is injective. We establish this as in Claim $1$, introducing
\begin{equation}
\begin{split}
&\zeta_{5, 4}: ((H_{\gamma_1}, \alpha|_{H_{\gamma_1}}, \gamma_{1}),
(H_{\gamma_2}, \alpha|_{H_{\gamma_2}}, \gamma_2),
(H_{\gamma_3}, \alpha|_{H_{\gamma_3}}, \gamma_3)) \mapsto
(H, \alpha, (\gamma_i)_{1\le i\le 3})\\
\end{split}
\end{equation}
where $\gamma_i$ is given in eq.~(\ref{E:bc-p1}),
$\alpha=\alpha|_{H_{\gamma_1, \gamma_2, \gamma_3}}$ and
$H=H_{\gamma_{1}}\cup H_{\gamma_1, \gamma_2}$.
Analogously, $c_2$ of Lemma~\ref{L:alpha-cutting} induces the mapping $c_{5, 4}$ and
$$
(c_{5, 4} \circ \zeta_{5, 4}) \circ \eta_{5, 4}=\text{\rm id},
$$
whence the mapping $\eta_{5,4}$ is injective.

Subjectivity of $\eta_{5, 4}$ is implied by the Cutting-Lemma which guarantees
$$
\eta_{5, 4} \circ (c_{5, 4} \circ \zeta_{5, 4})=\text{\rm id},
$$
whence Claim $2$ and the proof of the lemma is complete.
\end{proof}
{\bf Proof of Proposition \ref{P:tricellular}}.
\begin{proof}
We prove that the mapping
\begin{equation}
\begin{split}
& \theta\colon U^{II}_{g+2, n+2} \longrightarrow T_{g, n},\\
& u_{g+2, n+2} \mapsto t_{g, n}
\end{split}
\end{equation}
is a bijection.
As for welldefinedness, suppose $u_{g+2, n+2} \in U^{II}_{g+2, n+2}$ where
\begin{equation*}
\gamma=(R_1, \alpha(h_1^{2}), \ldots, h_1^{2},
\alpha(h_1^{3}), \ldots, h_1^{3}, h_1^{3}+1, \ldots, 2(n+1), S_1).
\end{equation*}
We use mapping $c_2$ of the Cutting-Lemma and
derive the planted $3$-cellular map, $x_{3, n+2}=(H, \alpha, (\gamma_i)_{1\le i\le 3})$,
where
\begin{equation}\label{E:bc-p3}
\begin{split}
&\gamma_1=\bar{\gamma}_{1}, \quad \gamma_2=\bar{\gamma}_{2}, \quad \\
&\gamma_3=(R_1, h_1^{3}+1, \ldots, 2(n+1), S_1).
\end{split}
\end{equation}
Here $\gamma_3$ is obtained by concatenating the sequence of half-edges contained
in $(R_1)$, $\bar{\gamma}_{3}$ and $(S_1)$.

For $u_{g+2,n+2}\in U^{II}_{g+2,n+2}$ none of the $H_{\bar{\gamma}_{i}}$, $i=1,2,3$ is
closed, whence the associated combinatorial graph of $x_{3, n+2}$ is connected.
Accordingly, $x_{3,n+2}=\theta(u_{g+2, n+2})$ is a planted tricellular map with
plants $(h_1^2)$, $(h_1^3)$ and $(S_1)$. Euler's characteristic formula implies
$t_{g, n}$ has genus $g$ and $n$ edges, whence $\theta$ is well-defined.
Injectivity and surjectivity of $\theta$ are implied by the Cutting Lemma.
\end{proof}
{\bf Proof of Lemma \ref{L:add-1edge}.}
\begin{proof}
{\bf Claim $1$:} The mapping
\begin{equation}
\begin{split}
&\eta_{1}\colon  U^1_{g+2, n+2} \longrightarrow  U_{g+2, n+1},               \\
&((H, \alpha, \gamma), (\alpha(h_1^2), h_1^2)) \mapsto
                          ((H', \alpha', \gamma'), (\alpha(h_1^2)-1, h_1^2-1)).
\end{split}
\end{equation}
is a bijection.

The contraction lemma implies that $\eta_1$ is welldefined. Injectivity of $\eta_1$ follows
by considering the mapping $\rho_1$ induced by the mapping $e_2$ of Lemma~\ref{L:contract2},
where $\rho_1((H', \alpha', \gamma'))=(H, \alpha, \gamma) \in U^1_{g+2,n+2}$.
Lemma~\ref{L:contract2} guarantees $\rho_1 \circ \eta_1=\text{\rm id}$, whence $\eta_1$ is
injective. Surjectivity of $\eta_1$ is a consequence of $\eta_1 \circ \rho_1=id$, implied by
Lemma~\ref{L:contract2}, see Fig.~\ref{F:one}.
\begin{figure}[ht]
\begin{center}
\includegraphics[width=0.4\columnwidth]{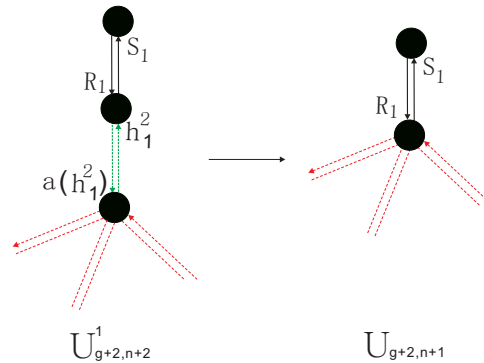}
\end{center}
\caption{\small The mapping $\eta_1$ and the one-sided edge $(\alpha(h_1^2), h_1^2))$ (green).}\label{F:one}
\end{figure}

The proof that $\eta_{j}\colon  U^j_{g+2, n+2} \longrightarrow  U_{g+2, n+1}$ is a bijection for
$j=2,3,4$ follows analogously, see Fig.~\ref{F:2-3-4}.

\begin{figure}[ht]
\begin{center}
\includegraphics[width=0.85\columnwidth]{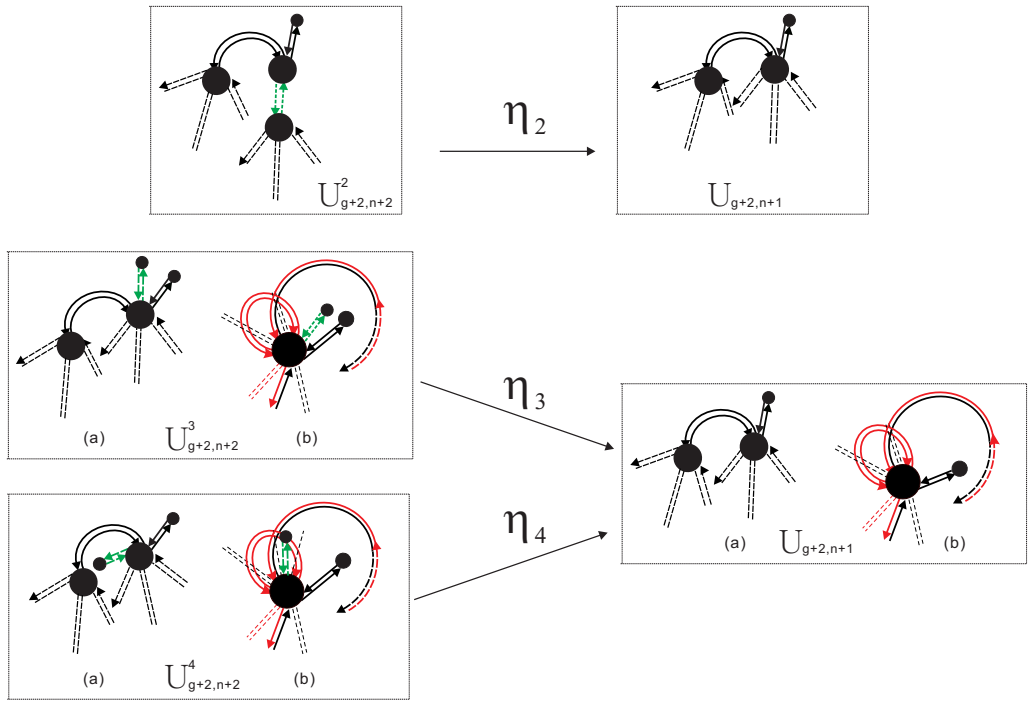}
\end{center}
\caption{\small The mappings $\eta_2$, $\eta_3$ and $\eta_4$.}\label{F:2-3-4}
\end{figure}
\end{proof}
{\bf The proof of Lemma \ref{L:add-2edge}.}
\begin{proof}
{\bf Claim $1$:} The mapping
\begin{equation}
\begin{split}
&\eta_{5}\colon  U^{m, 1}_{g+2, n+2} \longrightarrow  U_{g+2, n}, \\
& ((H, \alpha, \gamma), (\alpha(h_1^2), h_1^2), (\alpha(h_1^3), h_1^3)) \mapsto
((H', \alpha', \gamma'), (\alpha(h_1^2)-1, h_1^2-1), (\alpha(h_1^3)-1, h_1^3-1))
\end{split}
\end{equation}
is a bijection.

We first prove that $\eta_5$ is welldefined.
Consider $(H, \alpha, \gamma) \in U^{m, 1}_{g+2, n+2}$ together with two one-sided edges,
$(\alpha(h_1^2), h_1^2)$, $(\alpha(h_1^3), h_1^3)$, such that $h_1^{1}$ and $h_1^{2}$ are
incident to $v_1$, $\alpha(h_1^{1})$ and $\alpha(h_1^{2})$ are incident to $v_i$ and $v_j$.

We then apply Lemma~\ref{L:contract2} to a $(H, \alpha, \gamma) \in U^{m,1}_{g+2, n+2}$
together with the one-side edge $(\alpha(h_1^2), h_1^2)$. We iterate applying
Lemma~\ref{L:contract2} w.r.t.~the edge $(\alpha(h_1^3), h_1^3)$.
By definition of Lemma~\ref{L:contract2} this generates the unicellular map
$(H', \alpha', \gamma')$ of genus $(g+2)$ having $n$ edges
with distinguished four half-edges $\alpha(h_1^2)-1$, $h_1^2-1$,
$\alpha(h_1^3)-1$ and $h_1^3-1$.

Since Lemma~\ref{L:contract2} preserves genus, $\eta_5((H, \alpha, \gamma))$ has
genus $(g+2)$ and $n$ edges, whence $\eta_5$ is well-defined.

We next prove $\eta_5$ is injective. Suppose we have a unicellular map
$(H', \alpha', \gamma') \in U_{g+2, n}$ with four distinguished half-edges
$\alpha(h_1^2)-1$, $h_1^2-1$, $\alpha(h_1^3)-1$ and $h_1^3-1$. We observe that
the mapping $m_2$ constructed in Lemma~\ref{L:contract2} allows us to obtain
a mapping $\rho_5\colon U_{g+2,n} \to U^{m, 1}_{g+2, n+2}$ such that
$$
\rho_5\circ \eta_5=\text{\rm id},
$$
whence injectivity.

Surjectivity follows by computing $\eta_5 \circ \rho_5=\text{\rm id}$.

The proof that
$$
\eta_{6}\colon  U^{m, 2}_{g+2, n+2} \longrightarrow  U_{g+2, n}
\quad \text{\rm and}\quad
\eta_{7}\colon  U^{m, 3}_{g+2, n+2} \longrightarrow  U_{g+2, n}
$$
are bijections is analogous, see Fig.~\ref{F:5-6-7}.

\begin{figure}[ht]
\begin{center}
\includegraphics[width=0.55\columnwidth]{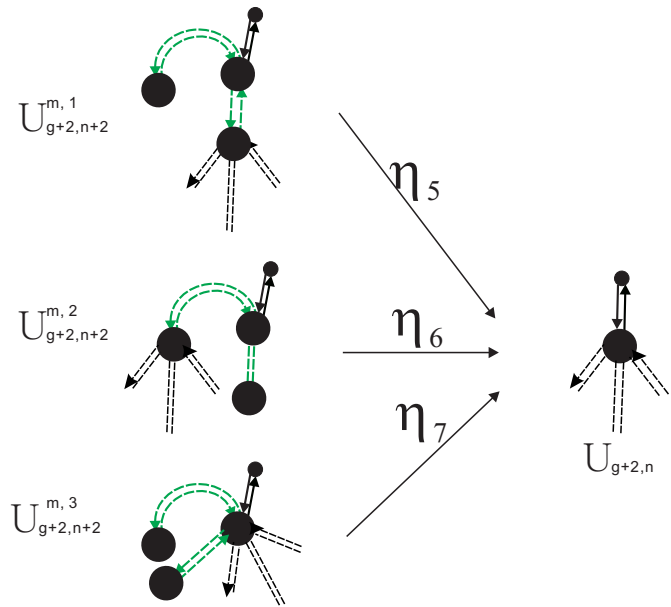}
\end{center}
\caption{\small The mappings $\eta_5$, $\eta_6$ and $\eta_7$.}\label{F:5-6-7}
\end{figure}
\end{proof}

{\bf The proof of Proposition \ref{P:insert2edges}}.

\begin{proof}
Proposition~\ref{P:insert2edges} follows directly from Lemma~\ref{L:beta-cutting}.
\end{proof}

{\bf The proof of Theorem~\ref{T:result}.}
\begin{proof}
According to Lemma~\ref{L:disconnect}, Lemma~\ref{L:add-1edge}, Lemma~\ref{L:add-2edge},
Proposition~\ref{P:tricellular} and Proposition~\ref{P:insert2edges},
we have
\begin{itemize}
\item ${\sf u}_{g+2}^{0, i}(n+2)= {\sf u}_{g+2}(n+1)$,
for $1 \leq i \leq 4$,
\item ${\sf u}^i_{g+2}(n+2)={\sf u}_{g+2}(n+1)$,
for $1 \leq i \leq 4$,
\item ${\sf u}^{m, j}_{g+2}(n+2)={\sf u}_{g+2}(n)$,
for $1 \leq j \leq 3$,
\item ${\sf u}^{II}_{g+2}(n+2)={\sf t}_g^{}(n)$,
\item ${\sf u}^{B}_{g+2}(n+2)=(n+1)(2n+1) {\sf u}_{g+1}(n)$.
\end{itemize}
According to eq.~(\ref{E:AB}), eq.~(\ref{E:I-II}) and eq.~(\ref{F:disU}) we have
\begin{equation}
\begin{split}
&{\sf u}_{g+2, n+2}\\
=&{\sf u}^{I}_{g+2}(n+2)+{\sf u}^{II}_{g+2}(n+2)+{\sf u}^B_{g+2}(n+2)\\
=& 1+ 2{\sf u}_{g+2}(n+1)+{\sf u}^{>2}_{g+2}(n+2))
+ {\sf t}_g^{}(n) + (n+1)(2n+1) {\sf u}_{g+1}(n).
\end{split}
\end{equation}
Furthermore, according to eq.~(\ref{E:geq2}), eq.~(\ref{E:U-g2-3}) and
eq.~(\ref{E:U-g3-4}), we have
\begin{equation}
\begin{split}
& {\sf u}^{>2}_{g+2}(n+2)\\
=&{\sf u}^5_{g+2}(n+2)+{\sf u}^{>2, 3}_{g+2}(n+2)
+{\sf u}_{g+2}^{>2, 4}(n+2)\\
=& {\sf d}_{g+2}(n)+2{\sf u}_{g+2}(n+1)-3{\sf u}_{g+2}(n),
\end{split}
\end{equation}
which establishes eq.~(\ref{E:result}).
\end{proof}


\section{ Acknowledgments.}
Many thanks to our group at SDU for discussions and suggestions.
We furthermore acknowledge the financial support of the Future and Emerging
Technologies (FET) programme within the Seventh Framework Programme (FP7) for
Research of the European Commission, under the FET-Proactive grant agreement
TOPDRIM, number FP7-ICT-318121.

\bibliographystyle{plain}      

\end{document}